\newcommand{\blind}{0}
\theoremstyle{theorem}
\newtheorem{theorem}{Theorem}[section]
\newtheorem{proposition}{Proposition}[section]
\newtheorem{lemma}{Lemma}[section]
\newtheorem{corollary}{Corollary}[section]
\newtheorem{example}{Example}[section]
\newtheorem{assumption}{Assumption}[section]
\newcommand{\R}{\ensuremath{\mathbf{R}}}
\newcommand{\Nn}{\ensuremath{\mathbf{N}}}
\newcommand{\E}{\ensuremath{\mathds{E}}}
\newcommand{\dx}{\ensuremath{\mathrm{d}}}
\newcommand{\si}{\perp \! \! \! \perp}
\DeclareMathOperator*{\argmin}{arg\,min}
\newcommand{\one}{\ensuremath{\mathbbm{1}}}
\begin{document}

	\def\spacingset#1{\renewcommand{\baselinestretch}%
		{#1}\small\normalsize} \spacingset{1}

	
	\if0\blind
	{
		\title{\bf Are Unobservables Separable?}
		\author{Andrii Babii\thanks{Department of Economics, University of North Carolina--Chapel Hill - Gardner Hall, CB 3305
				Chapel Hill, NC 27599-3305. Email: \href{mailto:babii.andrii@gmail.com}{babii.andrii@gmail.com}.}  \\
			\textit{\normalsize UNC Chapel Hill} \and Jean-Pierre Florens\thanks{Department of Economics, Toulouse School of Economics - 1, Esplanade de l'Universit\'{e}, 31080 Toulouse Cedex 06, France.} \\ \textit{\normalsize Toulouse School of Economics}}
		\maketitle
	} \fi
	
	\if1\blind
	{
		\bigskip
		\bigskip
		\bigskip
		\begin{center}
			{\LARGE\bf Are Unobservables Separable?}
		\end{center}
		\medskip
	} \fi
	
	\thispagestyle{empty}
	
	\bigskip
	\begin{abstract}
		\noindent It is common to assume in empirical research that observables and unobservables are additively separable, especially, when the former are endogenous. This is done because it is widely recognized that identification and estimation challenges arise when interactions between the two are allowed for. Starting from a nonseparable IV model, where the instrumental variable is independent of unobservables, we develop a novel nonparametric test of separability of unobservables. The large-sample distribution of the test statistics is nonstandard and relies on a novel Donsker-type central limit theorem for the empirical distribution of nonparametric IV residuals, which may be of independent interest. Using a dataset drawn from the 2015 US Consumer Expenditure Survey, we find that the test rejects the separability in Engel curves for most of the commodities.
	\end{abstract}
	
	\noindent%
	{\it Keywords:} unobservables, endogeneity, separability test, Engel curves, heterogeneity in unobservables, distribution of nonparametric IV residuals.
	
	\setcounter{page}{0}
	\newpage
	\spacingset{1.45} 

	\section{Introduction}
	It is common to assume in empirical research that observables and unobservables are additively separable, especially when the former are endogenous. This is done because it is widely recognized that identification and estimation challenges arise when interactions between the two are allowed for. However, the economic theory and considerations often lead to nonseparable models. Prominent examples are demand functions, where the price or income effects might be heterogeneous in unobserved preferences; production functions, where observed input choices may be heterogeneous in input choices unobserved by the econometrician; labor supply functions with heterogeneous wage effects; wage equations, where the returns to schooling might vary with unobserved ability; or more generally, treatment effect models, where causal effects are heterogeneous in unobservables.
	
	In response to these empirical challenges, there is a growing literature studying the nonparametric identification of nonseparable models with endogeneity; see \cite{chernozhukov2005iv}, \cite{chernozhukov2007instrumental}, \cite{florens2008identification}, \cite{imbens2009identification}, \cite{torgovitsky2015identification}, and \cite{d2015identification} among many others. It is well-understood that the fully nonparametric estimation of a nonseparable model may lead to a difficult nonlinear ill-posed inverse problem; see \cite{carrasco2007linear}, \cite{horowitz2007nonparametric}, \cite{gagliardini2012tikhonov}, and \cite{dunker2014iterative}.
	
	Since a fully nonparametric estimation of a nonseparable model is more challenging and since separable models rule out the heterogeneity of marginal effects in unobservables, detecting separability is desirable in empirical applications. If the separability is rejected, then the more sophisticated nonseparable models should not be neglected, while if it turns out that the structural relation is separable, then the conventional empirical practice could be well-justified. 
	
	Despite the significant efforts focused on understanding the identification and the estimation of nonseparable IV models and the widespread use of separable IV models in empirical practice, little work has been done on developing formal testing procedures that could discriminate empirically between the two. \cite{lu2014testing} and \cite{su2015testing} are notable exceptions that develop separability tests under the \textit{conditional independence restriction} and additional identifying restrictions imposed by the nonseparable model. The conditional independence restriction is different from the \textit{mean-independence restriction} imposed by the separable nonparametric IV model and does not allow justifying the separable nonparametric IV model that we are interested in here. Other recent specification tests for the nonseparable model include the monotonicity test of \cite{hoderlein2014testing}, the endogeneity test of \cite{feve2013estimation}, and the specification test for the quantile IV regression of \cite{breunig2013specification}.
	
	In this paper, we design a novel fully nonparametric separability test. Our test is based on the independence condition of the nonseparable model and does not rely on additional identifying restrictions, such as the monotonicity in unobservable. The test is based on the insight that the structural function in the separable model can be estimated using the nonparametric IV approach; see \cite{florens2003inverse}, \cite{newey2003instrumental}, \cite{hall2005nonparametric}, \cite{blundell2007semi}, and \cite{darolles2011nonparametric}. If the separable model is correct, then the nonparametric IV residuals should approximate unobservables that are independent of the instrumental variables in the nonseparable IV model. This intuition suggests that it should be possible to detect the separability with the classical Kolmogorov-Smirnov or Cram\'{e}r-von Mises independence tests between the nonparametric IV residuals and the instrumental variable. To the best of our knowledge, no such test is currently available in the literature, and it is not known whether the empirical distribution of the nonparametric IV residuals satisfies the Donsker property.
	
	Formalizing this intuition is far from trivial since the regression residuals are different from the true regression errors and the nonparametric IV regression is an example of a linear \textit{ill-posed inverse} problem and requires regularization. Moreover, the empirical distribution function of the nonparametric IV residuals is a \textit{non-smooth} function of the estimated nonparametric IV regression. The weak convergence of the empirical distribution of regression residuals in the parametric linear case is a classical problem in statistics; see, e.g., \cite{durbin1973weak}, \cite{loynes1980empirical}, and \cite{mammen1996empirical}. The extension to the nonparametric regression is more challenging, and it is remarkable that the empirical distribution of nonparametric regression residuals still converges weakly as was shown in \cite{akritas2001non}. The additively separable nonparametric IV regression differs from the problems discussed above in two important directions. First, its finite-sample and the asymptotic performance depend both on the smoothness of the regression function and the smoothing properties of the conditional expectation operator. Second, it features an additional dependence between the endogenous regressor and the regression error that cannot be neglected in practice. 
	
	In this paper, we show that the empirical distribution function of nonparametric IV residuals converges weakly to a Gaussian process at a parametric rate, even though residuals are obtained from the nonparametrically estimated ill-posed inverse problem. To the best of our knowledge, this is the first result on the distribution of the nonparametric IV residuals, which can be used to develop various residual-based specification tests and is of independent interest. Building on this result, we obtain the large sample approximation to the distributions of independence separability tests. The distributions of residual-based independence tests are non-standard and not amenable to standard bootstrap approximations. Therefore, we suggest using the $m$ out of $n$ bootstrap or subsampling to compute the critical values.
	
	Our results are based on the insight that the Tikhonov regularization in Sobolev spaces, considered in \cite{florens2011identification}, \cite{gagliardini2012tikhonov}, \cite{carrasco2013asymptotic}, and \cite{gagliardini2017specification}, among others, provides a natural link between the modern empirical process theory and the theory of ill-posed inverse problems. In regards to this literature, we obtain new results for the Tikhonov regularization with a Sobolev penalty that can be applied to generic ill-posed inverse problems, including various nonparametric IV estimators, e.g., based on kernel smoothing. In particular, the Tikhonov regularization with a Sobolev penalty achieves sufficiently fast convergence rates for the semiparametric theory. In contrast, the simple one-step Tikhonov regularization without Sobolev penalization suffers from the well-known saturation effects; see \cite{darolles2011nonparametric}.
	
	The paper is organized as follows. In Section~\ref{sec:separability_test}, we present two motivating examples, where economic considerations lead to nonseparable models with endogeneity and discuss a testable implication of separability. In Section~\ref{sec:test_statistics}, we characterize the large sample approximation to the distribution of the residual-based Kolmogorov-Smirnov and Cram\'{e}r-von Mises independence tests and introduce a resampling procedure to compute the critical values. We also study the behavior of these tests under the fixed and the local alternative hypotheses.  We report on a Monte Carlo study in Section~\ref{sec:mc} which provides insights about the validity of our asymptotic approximations in finite samples. In Section~\ref{sec:empirics}, we test the separability of Engel curves for a large set of commodities and find that the separability is rejected most of the time. Conclusions appear in Section~\ref{sec:conclusions}. All technical details, auxiliary results, and proofs are collected in the Appendix and the Supplementary Material.
	
	\section{Separability of unobservables}\label{sec:separability_test}
	\subsection{Motivating examples}
	The instrumental variable models with additively separable unobservables constitute a workhorse of modern empirical practice. However, the additive separability of unobservables is a restrictive modeling assumption that essentially rules out the heterogeneity of estimated causal structural effects in unobservables; see, e.g., \cite{heckman2001micro} or \cite{imbens2007nonadditive}. Indeed, the structural economic models typically lead to nonseparable unobservables as illustrated below.
	
	\begin{example}[Demand function]
		Consider a random utility maximization problem
		\begin{equation*}
			Q = \underset{q\in\R^J:\;P^\top q = I}{\mathrm{argmax}}\;U(q,\varepsilon),
		\end{equation*}
		where $U(.,.)$ is a utility function, $Q$ is a vector of demanded quantities, $\varepsilon$ is an individual preference variable, unobserved by the econometrician, $P$ is a vector of prices, and $I$ is the income. The solution to this optimization problem leads to the nonseparable demand functions $Q_j = \Phi(P,I,\varepsilon)$ for each good $j = 1,\dots,J$ as shown in \cite{brown1989random} and \cite{lewbel2001demand}; see also \cite{hoderlein2018estimating} for the welfare analysis based on the nonseparable model. The nonseparable demand functions may lead in turn to the nonseparable Engel curves.
	\end{example}
	
	\begin{example}[Production function/frontier]
		\cite{simar2016unobserved} consider a production process with unobserved heterogeneity that leads to the production function/frontier $\phi$ such that $Y = \phi(Z,\varepsilon) - U$, where $Y$ is an output, $Z$ are observed inputs, $\varepsilon$ is an environmental factor, and $U\geq 0$ is a measure of inefficiency. In this example, the nonseparable model is generated by the fact that the environmental factor is taken into account along with other input choices by firms, and, at the same time, the former is not observed by the econometrician.
	\end{example}
	
	\subsection{A testable implication}
	Let $(Y,Z,W)$ be observed random variables admitting a nonseparable representation
	\begin{equation}\label{eq:nonseparable}
		Y = \Phi(Z,\varepsilon),\qquad \varepsilon\si W,
	\end{equation}
	where $Y\in\R$ is outcome, $Z\in\R^p$ are regressors, $\varepsilon\in\R$ is unobservable, $W\in\R^q$ is a vector of instrumental variables, and $\Phi:\R^{p}\times \R\to\R$ is a structural function. We assume that $W$ are valid instrumental variables satisfying the exclusion restriction, $\varepsilon\si W$, and the relevance condition, $W\not\si Z$. Note that the independence exclusion restriction is a commonly used identifying condition for nonseparable models; see \cite{chernozhukov2020semiparametric}, \cite{blundell2017nonparametric}, \cite{torgovitsky2017minimum}, \cite{torgovitsky2015identification}, \cite{d2015identification}, \cite{dunker2014iterative}, \cite{gagliardini2012tikhonov}, and \cite{horowitz2007nonparametric} for recent examples and applications, as well as \cite{chernozhukov2013quantile}, \cite{matzkin2013nonparametric}, and \cite{imbens2007nonadditive} for the review of earlier econometrics literature on the identification of nonseparable models.
	
	The independence condition $\varepsilon\si W$ does not rule-out the heteroskedasticity in the distribution of $Y$ conditionally on $Z$ or $W$, which is often observed in the empirical practice. It also does not rule-out the heteroskedasticity in the distribution of unobservables $\varepsilon$ conditionally on covariates $Z$. However, it rules out the heteroskedasticity of unobservables conditionally on the instrumental variable, which could be less restrictive, since the instrumental variable is univariate in typical applications. This leads to an interesting trade-off between the heterogeneity of causal structural effects in unobservables allowed for in the nonseparable model and the heteroskedasticity of unobservables conditionally on the instrumental variable allowed for in the separable model.
	
	To develop the separability test, several strategies can be adopted. For instance, one could nonparametrically estimate the nonseparable model and check whether the separability holds. This approach corresponds to the principle behind the Wald test for parametric models. Alternatively, since the nonparametric identification and estimation of the separable model is easier, one could estimate the separable model and check the independence condition of the nonseparable model. This approach corresponds to the principle behind Rao's score test in the parametric setting and is the one adopted in this paper.
	
	We say that the model in equation~(\ref{eq:nonseparable}) has a \textit{separable representation} if there exists measurable functions $\psi:\R^p\to\R$ and $g:\R\to\R$ such that
	\begin{equation*}
		Y = \psi(Z) + g(\varepsilon).
	\end{equation*}
	
	If the model has a separable representation, then the structural function can be estimated consistently using the nonparametric IV approach; see \cite{darolles2011nonparametric}, \cite{blundell2007semi}, \cite{horowitz2007nonparametric}, and \cite{newey2003instrumental}. The nonparametric IV regression function $\varphi:\R^p\to\R$ solves the functional equation
	\begin{equation}\label{eq:npiv_moment}
		r(w) \triangleq \E[Y|W=w]f_W(w) = \int\varphi(z)f_{ZW}(z,w)\dx z \triangleq (T\varphi)(w),
	\end{equation}
	where $T:L_2(\R^p)\to L_2(\R^q)$ is an integral operator. Let $U\triangleq Y - \varphi(Z)$ be the nonparametric IV regression error. Note that even if the model is nonseparable, we still have $\E[U|W]=0$ with $U=Y-\varphi(Z)$ for $\varphi$ solving the functional equation (\ref{eq:npiv_moment}). The following result provides a convenient for us testable implication of separability, provided that $U$ is unambiguously defined, see Appendix for a formal proof.
	
	\begin{proposition}\label{prop:separability}
		Suppose that there exists a unique solution to equation~(\ref{eq:npiv_moment}). If the model in equation~(\ref{eq:nonseparable}) admits a separable representation, then $U\si W$.
	\end{proposition}
	It is worth mentioning that the independence between $U$ and $W$ is only a \textit{testable implication} of additive separability of unobservables. However, when the model is nonseparable, we have $U=\Phi(Z,\varepsilon)-\varphi(Z)\triangleq h(Z,\varepsilon)$, for some non-degenerate function $h$ of $(Z,\varepsilon)$, which in many cases is not independent of $W$, because $Z\not\si W$ by the relevance condition. Therefore, the independence test between $U$ and $W$ will have power against many interesting deviations from the separability. Note also that Proposition~\ref{prop:separability} relies on the injectivity of $T$, which is known as a completeness condition, see \cite{newey2003instrumental} and \cite{babii2017completeness}, and does not require that the nonseparable model is identified; see, e.g., \cite{chernozhukov2005iv} and \cite{chen2014local}. Lastly, note that the additive separability is different from the multiplicative separability when $Y=\psi(Z)g(\varepsilon)$. However, when $Y,\psi(Z)$, and $g(\varepsilon)$ are positive, we obtain the additively separable model after taking logs.
	
	\section{Independence test}\label{sec:test_statistics}
	In this section, we introduce tests of the independence condition characterized in Proposition~\ref{prop:separability}. Formally, we focus on testing
	\begin{equation*}
		H_0:\; U\si W\qquad \text{vs.} \qquad U\not\si W.
	\end{equation*}
	$H_0$ is testable, provided that the nuisance parameter $\varphi$ in $U=Y-\varphi(Z)$ is replaced by the appropriate estimator. 
	
	\subsection{Tikhonov regularization in Sobolev spaces}
	We focus on the Tikhonov-regularized estimator penalized by the Sobolev norm to estimate the nuisance parameter $\varphi$; see \cite{carrasco2013asymptotic}, \cite{gagliardini2012tikhonov}, and \cite{florens2011identification}. The attractive feature of this estimator is that it does not suffer from the well-known saturation bias and can achieve a sufficiently fast convergence rate for our asymptotic theory and more generally for semiparametric applications; see Corollary~\ref{cor:rates_npiv} in the Appendix. 
	
	Let $(L_2(\mathbf{R}^p),\|.\|)$ denote the space of functions square-integrable with respect to the Lebesgue measure. Let $\langle x\rangle^s\triangleq (1+|x|^2)^{s/2}$ be a polynomial weight function with $s\in \R$, where $x\in\R^p$ and $|.|$ is a Euclidean norm on $\R^p$. Consider the operator $L^sf=F^{-1}(\langle .\rangle^sFf)$ defined for all $f$ such that $\|\langle.\rangle^2 Ff\|<\infty$, where $F$ is a Fourier transform on $L_2(\R^p)$ with scaling $(2\pi)^{-p/2}$. Then the self-adjoint operator $L$ generates a Hilbert scale of Sobolev spaces
	\begin{equation*}
		H^s(\R^p) = \left\{f\in L_2(\R^p):\; \|f\|_{s} \triangleq \|L^sf\| <\infty \right\};
	\end{equation*}
	see \cite{krein1966scales} for more details on Banach and Hilbert scales. 
	
	Let $(\hat T,\hat r)$ be the kernel estimators of $(T,r)$ in equation~(\ref{eq:npiv_moment}) computed as
	\begin{equation}\label{eq:r_and_T}
		\begin{aligned}
			\hat r(w) = \frac{1}{nh_n^q}\sum_{i=1}^nY_iK_w\left(h_n^{-1}(W_i - w)\right),\qquad (\hat T\phi)(w)  = \int\phi(z)\hat f_{ZW}(z,w)\dx z, \\
			\hat f_{ZW}(z,w)  = \frac{1}{nh_n^{p+q}}\sum_{i=1}^nK_z\left(h_n^{-1}(Z_i - z)\right)K_w\left(h_n^{-1}(W_i - w)\right),
		\end{aligned}
	\end{equation}
	where $K_z:\R^p\to \R$ and $K_w:\R^q\to\R$ are kernel functions and $h_n\to 0$ is a sequence of bandwidth parameters. 
	
	We estimate $\varphi$ using the Tikhonov-regularized estimator penalized by the Sobolev norm with $s\geq 0$
	\begin{equation*}
		\hat\varphi = \argmin_{\phi}\left\|\hat T \phi - \hat r\right\|^2 + \alpha_n\|\phi\|_s^2,
	\end{equation*}
	where $\hat T$ and $\hat r$ are as described above. It is easy to see that this problem has a closed-form solution
	\begin{equation*}
		\hat \varphi = L^{-s}(\alpha_n I + \hat T_s^*\hat T_s)^{-1}\hat T_s^*\hat r,
	\end{equation*}
	where $\hat T_s = \hat TL^{-s}$ and $\hat T_s^*$ is the adjoint operator to $\hat T_s$. 
	
	\subsection{Distribution of statistics}
	Let $\hat U_i=Y_i - \hat{\varphi}(Z_i)$ be the nonparametric IV residuals and let
	\begin{equation}\label{eq:edf}
		\hat F_{\hat UW}(u,w) = \frac{1}{n}\sum_{i=1}^n\one_{\{\hat U_i\leq u,W_i\leq w \}},\quad \hat F_{\hat U}(u) = \frac{1}{n}\sum_{i=1}^n\one_{\{\hat U_i\leq u\}},\quad \hat F_W(w) = \frac{1}{n}\sum_{i=1}^n\one_{\{W_i\leq w\}}
	\end{equation}
	be the empirical distribution functions. To test $H_0$, we focus on the following residual-based independence empirical process
	\begin{equation*}
		\mathbb{G}_n(u,w) = \sqrt{n}\left(\hat F_{\hat UW}(u,w) - \hat F_{\hat U}(u)\hat F_W(w)\right).
	\end{equation*}
	Note that this process involves residuals $\hat U_i$ instead of the true regression errors $U_i$, hence, its asymptotic behavior can be significantly different from the asymptotic behavior of classical independence empirical processes; see \cite{van2000weak}, Chapter 3.8. In particular, the estimation of the nuisance component $\varphi$ may affect the asymptotic distribution of the independence empirical process.
	
	To understand the behavior of $\mathbb{G}_n$, we introduce several assumptions.
	\begin{assumption}\label{as:hilbert_scale}
		For some $a,b>0$
		\begin{enumerate}
			\item[(i)] Operator smoothing: $\|T\phi\|_v\sim \|\phi\|_{v-a}$ for all $\phi\in L_2(\R^p)$ and $v\in\R$.
			\item[(ii)] Parameter smoothness: $\varphi\in H^b(\R^p)$.
		\end{enumerate}
	\end{assumption}
	Assumption~\ref{as:hilbert_scale} (i) describes the smoothing property of the operator $T$. Roughly speaking, the action of $T$ increases the Sobolev smoothness by $a$, which is called the degree of ill-posedness. Intuitively, the more $T$ smooths out features of $\varphi$, the harder it is to recover it from the equation~(\ref{eq:npiv_moment}). Condition (ii) describes the smoothness of the structural function $\varphi$ and is a standard smoothness restriction in the nonparametric literature.
	
	\begin{assumption}\label{as:dgp}
		(i) $(Y_i,Z_i,W_i)_{i=1}^n$ are i.i.d. observations of $(Y,Z,W)$ with $\E|Y|^2<\infty$ $\E\|W\|<\infty$, $\E\|Z\|<\infty$, and $\E\left[U^2|W\right]\leq C<\infty$; (ii) the distribution of $(U,Z,W)$ is absolutely continuous with respect to the Lebesgue measure with densities $f_Z,f_W,f_{ZW},f_{U|Z}\in L_\infty$ and $f_Z,f_{ZW}\in L_2$; (iii) $f_{ZW}\in H^t(\R^{p+q})$ for some $t>0$; (iv) $K_z$ and $K_w$ products of a univariate continuous kernel $K\in L_2(\R)\cap L_\infty(\R)$ of bounded variation with $\int K(u)\dx u = 1$, $\int|u|^l|K(u)|\dx u<\infty$, and $\int u^kK(u)\dx u=0$ for $k\in\{1,\dots,l\}$ and $l\geq t$.
	\end{assumption}
	Assumption~\ref{as:dgp} describes several mild conditions on the distribution of the data and the kernel functions that are largely standard for kernel estimators; see also \cite{darolles2011nonparametric}, Appendix B for a discussion of generalized boundary kernels that can be used when supports are bounded. To introduce the next assumption, let $\partial_u$ be a partial derivative with respect to the variable $u$, let $\|.\|_\infty$ denote the uniform norm, and put $x\vee y = \max\{x,y\}$ and $x\wedge y = \min\{x,y\}$. 
	
	\begin{assumption}\label{as:smoothness}
		(i) $\|\partial_u f_{UZ}\|_{\infty} < \infty$ and $\sup_u\|f_{UZ}(u,.)\|_{\kappa} < \infty$ with $\kappa> 2a\vee(a+q/2)$; (ii) $\left\|\int_{\{v\leq .\}}\partial_u f_{UZW}(.,.v)\dx v\right\|_\infty < \infty$ and $\sup_{u,w}\left\|\int^wf_{UZW}(u,.,v)\dx v\right\|_{\kappa}<\infty$ with $\kappa>2a\vee(a+q/2)$;
	\end{assumption}
	Assumption~\ref{as:smoothness} imposes some relatively mild smoothness conditions on the distribution of the data.
	
	\begin{assumption}\label{as:tuning}
		$h_n\to 0$ and $\alpha_n\to 0$ as $n\to\infty$ are such that (i) $nh_n^q\alpha_n^{2(a+c)/(a+b)}\to\infty$, $nh_n^{p+q}\alpha_n\to\infty$, and $h_n^{2t}/\alpha_n\to 0$; (ii) $\sqrt{n}\alpha_n^{2b/(a+b)}\to 0$, $\sqrt{n}h_n^q\alpha^{2a/(a+b)}\to\infty$, and $\sqrt{n}h_n^{2t}/\alpha_n^{2a/(a+b)}\to 0$; (iii) $n\alpha_n^2\to0$, $nh_n^{2(b\wedge2t)}\to 0$, and $nh_n^{p+2q}\to\infty$; where $2s=b-a\geq 0$, $b>c$, $s\geq c>p/2$, $t>(p+q)/2$,  $a,b,t,p,q$ are as in Assumptions~\ref{as:hilbert_scale} and \ref{as:dgp}.
	\end{assumption}
	Assumption~\ref{as:smoothness} (i) provides a set of sufficient conditions for $\|\hat\varphi - \varphi\|_c = o_P(1)$ with $c>p/2$, while condition (ii) states additional requirements for $\|\hat\varphi - \varphi\| = o_P(n^{-1/4})$; see Corollary~\ref{cor:rates_npiv} in the Appendix. The former condition is needed for the asymptotic equicontinuity argument, while the latter requires that the nuisance parameter $\varphi$ is estimated at a sufficiently fast rate, which is often encountered in the semiparametric literature. Lastly, condition (iii) ensures that a certain uniform asymptotic expansion holds. To illustrate that conditions on tuning parameters are feasible, suppose for simplicity that $p=q=c=1$ and that $h_n\sim n^{-c_1}$ and $\alpha_n\sim n^{-c_2}$ for some $c_1,c_2\in(0,1)$. Then (i) requires  that $c_1+2c_2(a+c)/(a+b)<1$, $2c_1+c_2<1$, and $t>c_2/2c_1$. For (ii), we additionally need $c_2>(a+b)/4b$, $c_1+2c_2a/(a+b)<0.5$, and $t>1/4c_1+c_2a/c_1(a+b)$. Lastly, (iii) requires that $c_2>0.5$, $c_1>1/2(b\wedge 2t)$, and $c_1<1/3$. Therefore, we require $(c_1,c_2)\in\{(x,y)\in\R^2: 0.5<y<1-2x,x\in(1/2(b\wedge 2t),1/3) \}$, which is non-empty provided that $b\wedge 2t>3/2$. Given this choice, the following smoothness conditions are imposed in Assumption~\ref{as:tuning}: $t>[1/4c_1+c_2a/c_1(a+b)]\wedge [c_2/2c_1]$ and $b>[2c_2(a+c)/(1-c_1) - a]\wedge[2c_2a/(0.5-c_1) - a]\wedge [1/(c_2-0.25)]$.
	
	The following result describes a convenient for us approximation to the residual-based independence empirical process:
	\begin{theorem}\label{thm:main}
		Suppose that Assumptions~\ref{as:hilbert_scale}, \ref{as:dgp}, \ref{as:smoothness}, and \ref{as:tuning} are satisfied. Then
		\begin{equation*}\footnotesize
			\begin{aligned}
				\mathbb{G}_n(u,w) =  \frac{1}{\sqrt{n}}\sum_{i=1}^n\one_{\{U_i\leq u,W_i\leq w\}} - \one_{\{U_i\leq u\}}F_W(w) - \one_{\{W_i\leq w\}}F_U(u) + F_{UW}(u,w) + \delta_{u,w}(U_i,W_i) + o_P(1)
			\end{aligned}
		\end{equation*}
		uniformly over $(u,w)\in\R\times\R^q$ with
		\begin{equation*}
			\begin{aligned}
				\delta_{u,w}(U_i,W_i) & = U_i\left(T(T^*T)^{-1}\rho(u,.,w)\right)(W_i), \\
				\rho(u,z,w) & = \int^wf_{UZW}(u,z,\tilde w)\dx \tilde w - f_{UZ}(u,z)F_W(w).
			\end{aligned}
		\end{equation*}
	\end{theorem}
	It if worth mentioning that Theorem~\ref{thm:main} does not require $U\si W$. The proof of this result can be found in the Appendix and relies on the asympttoic equicontinuity arguments. Roughly speaking, we show that the consistency of the nonparametric IV estimator in the Sobolev norm together with the Donsker property of Sobolev balls imply that that certain terms associated with residuals are asymptotically negligible. At the same time, the estimation of the nuisance component $\varphi$ has a first-order asymptotic effect due to the $\delta_{u,w}(U_i,W_i)$ term, while the higher-order terms are negligible provided that $\|\hat\varphi - \varphi\|=o_P(n^{-1/4})$. This rate condition is typically encountered for the semiparametric problems; see \cite{chernozhukov2018double} and \cite{chernozhukov2016locally} for recent contributions, \cite{andrews1994empirical} for earlier treatment, and \cite{babii2020high}, Section 3.3 for a related discussion in the setting of ill-posed inverse problems.
	
	It is worth mentioning that, in some cases, the estimation of nuisance parameters does not have any first-order asymptotic effect, which is known as the Neyman orthogonality property in the semiparametric literature. In particular, this is the case for the independence empirical process based on the nonparametric conditional mean regression residuals; see  \cite{einmahl2008specification}. Interestingly, if we had $W\si(U,Z)$, then $\rho=0$, and the estimation of $\varphi$ would not have any first-order asymptotic effect.
	
	Theorem~\ref{thm:main} can be readily used to construct the residual-based Cram\'{e}r-von Mises and Kolmogorov-Smirnov statistics
	\begin{equation*}
		\begin{aligned}
			T_{2,n} = \iint |\mathbb{G}_n(u,w)|^2\dx \hat F_{\hat UW}(u,w) \qquad \text{and}\qquad T_{\infty,n} = \sup_{u,w}|\mathbb{G}_n(u,w)|.
		\end{aligned}
	\end{equation*}
	To understand the behavior of the two statistics under the null and the alternative hypotheses, consider a centered version of the process in Theorem~\ref{thm:main}
	\begin{equation*}
		\mathbb{H}_n(u,w) = \frac{1}{\sqrt{n}}\sum_{i=1}^nh_{u,w}(U_i,W_i) - \E[h_{u,w}(U_i,W_i)] ,
	\end{equation*}
	where $h_{u,w}(U,W) = \one_{\{U\leq u,W\leq w\}} - \one_{\{U\leq u\}}F_W(w) - \one_{\{W\leq w\}}F_U(u) + F_{UW}(u,w) + \delta_{u,w}(U,W)$. The following Donsker-type central limit theorem holds:
	\begin{proposition}\label{corr:main}
		Suppose that assumptions of Theorem~\ref{thm:main} are satisfied. Then
		\begin{equation*}
			\mathbb{H}_n \leadsto \mathbb{H}\qquad \mathrm{in}\qquad L_\infty(\R\times\R^q),
		\end{equation*}
		where $\mathbb{H}$ is a tight centered Gaussian process with uniformly continuous sample paths and the covariance function
		\begin{equation*}
			\begin{aligned}
				(u,w,u',w') \mapsto \E\left[(h_{u,w}(U,W) - \E[h_{u,w}(U,W)])(h_{u',w'}(U,W) - \E[h_{u',w'}(U,W)])\right].
			\end{aligned}
		\end{equation*}
	\end{proposition}
	Note that under the null hypothesis $H_0:U\si W$, we have $\E[h_{u,w}(U,W)]=0$ and the covariance function of $\mathbb{H}$ simplifies to
	\begin{equation*}\footnotesize
		\begin{aligned}
			(u,w,u',w') \mapsto & \E\left[\left(\one_{\{U\leq u,W\leq w\}} - \one_{\{U\leq u\}}F_W(w) - \one_{\{W\leq w\}}F_U(u) + F_{UW}(u,w) + \delta_{u,w}(U,W)\right)\times\right. \\
			& \quad\left.\times\left(\one_{\{U\leq u',W\leq w'\}} - \one_{\{U\leq u'\}}F_W(w') - \one_{\{W\leq w'\}}F_U(u') + F_{UW}(u',w') + \delta_{u',w'}(U,W)\right)\right].
		\end{aligned}
	\end{equation*}
	For the alternative hypothesis, $H_1:U\not\si W$, put
	\begin{equation*}
		\begin{aligned}
			d_2 & = \iint|F_{UW}(u,w) - F_U(u)F_W(w)|^2\dx F_{UW}(u,w),\quad d_\infty = \sup_{u,w}|F_{UW}(u,w)-F_U(u)F_W(w)|. \\
		\end{aligned}
	\end{equation*}
	Consider also a sequence of local alternative hypotheses
	\begin{equation*}
		H_{1,n}:\; F_{UW}(u,w)=F_U(u)F_W(w)+ n^{-1/2}H(u,w),\qquad \forall u,w,
	\end{equation*}
	where the function $H$ is such that $F_{UW}$ is a proper CDF. There exist several ways to construct such local alternatives with prespecified marginal distributions $F_U$ and $F_W$. For instance, the Morgenstern's family is $F_{UW}(u,w)=F_{U}(u)F_{W}(w) + aF_{U}(u)F_{W}(w)(1-F_U(u))(1-F_W(w))$ with $a\in[-1,1]$; see \cite{devroye1986nonuniform}, Chapter XI, Theorem 3.2. The following corollary describes the behavior of the independence test under the null and fixed/local alternative hypotheses:
	
	\begin{corollary}\label{cor:asympt_distributions}
		Suppose that assumptions of Theorem~\ref{thm:main} are satisfied. Then under $H_0$
		\begin{equation*}
			T_{2,n} \leadsto\iint |\mathbb{H}(u,w)|^2\dx F_{UW}(u,w)\qquad \text{and}\qquad T_{\infty,n} \leadsto\sup_{u,w}|\mathbb{H}(u,w)|,
		\end{equation*}
		while under $H_1$, we have $T_{2,n},T_{\infty,n}\xrightarrow{\mathrm{a.s.}}\infty$, provided that $d_2,d_\infty>0$. Moreover, under $H_{1,n}$
		\begin{equation*}
			T_{2,n} \leadsto\iint |\mathbb{H}(u,w) + 2H(u,w)|^2\dx F_{UW}(u,w)\qquad \text{and}\qquad T_{\infty,n} \leadsto\sup_{u,w}|\mathbb{H}(u,w) + 2H(u,w)|.
		\end{equation*}
	\end{corollary}
	
	Corollary~\ref{cor:asympt_distributions} shows that the residual-based independence test can detect parametric local alternatives. The asymptotic distributions under $H_0$ are not pivotal, in contrast to the nonparametric regression without endogeneity, cf. \cite{einmahl2008specification}. While obtaining the distribution-free statistics is possible in simpler residual-based testing problems, see \cite{escanciano2018asymptotic}, these methods do not seem to extend naturally to our setting. Therefore, the bootstrap could be an attractive alternative for simulating the critical values of the test. Interestingly, the naive nonparametric and the multiplier bootstraps do not work.
	
	\subsection{Critical values}
	The asymptotic distributions in Corollary~\ref{cor:asympt_distributions} are nonstandard and depend on several nuisance nonparametric components. This calls for resampling methods to compute the critical values.  As can be seen from the proof of Theorem~\ref{thm:main}, our uniform asymptotic expansion relies on the differentiability of the CDF. This leads to a dependence of the asymptotic distribution on the probability density function $f_{UZW}$ in Corollary~\ref{cor:asympt_distributions}; see also the proof of Theorem~\ref{thm:residuals} and Corollary~\ref{corr:clt}. Such uniform asymptotic expansion cannot be obtained in the same way for the bootstrapped statistics since in the bootstrap world the empirical distribution function is not differentiable. 
	
	The lack of smoothness of the empirical distribution function suggests that the standard bootstrap procedures may fail in approximating the asymptotic distribution of the test statistics. The problem of a similar nature occurs with the bootstrap of the cube-root consistent estimators; see, e.g., \cite{babii2019isotonic} and references therein. Another complication with the bootstrap is that we typically need to resample from the distribution obeying the constraints of the null hypothesis and that the validity of the bootstrap has to be established case-by-case. Note also that the (smoothed) residual bootstrap, cf. \cite{neumeyer2019bootstrap}, does not preserve the dependence between the endogenous regressor and the unobservables and does not mimic the data generating process of the IV regression under the null hypothesis. In Section~\ref{sec:mc}, we find in Monte Carlo experiments that the standard nonparametric bootstrap does not work.
	
	Consequently, we suggest relying on the subsampling or the $m$ out of $n$ bootstrap to compute the critical values of the test. The resampling procedure is as follows:
	\begin{enumerate}
		\item Draw a sample of size $m$ from $(Y_i,Z_i,W_i)_{i=1}^n$ without replacements (subsampling) or with replacements ($m$ out of $n$ bootstrap), where $m=m_n$ is a sequence such that $m_n\to\infty$ and $m_n/n\to 0$ and as $n\to\infty$.
		\item Compute the Kolmogorov-Smirnov or the Cram\'{e}r-von Mises statistics using the simulated sample.
		\item Repeat the first two steps many times and compute the critical values using empirical quantiles of the statistics over all simulated samples. Alternatively, compute p-values as $1 - F_n^*(T_n)$, where $T_n$ is the statistics computed from $(Y_i,Z_i,W_i)_{i=1}^n$ and $F_n^*$ is the empirical distribution function of bootstrapped statistics.
	\end{enumerate}
	An attractive feature of subsampling is that it is valid for general hypothesis testing problems; see \cite{politis2001asymptotic}, Theorem 3.1, and there is no need to show its validity in each specific application. An adaptive data-driven rule to select $m_n$ is considered, e.g., in \cite{bickel2008choice}.
	
	\section{Monte Carlo experiments}\label{sec:mc}
	To evaluate the finite-sample performance of the test, we simulate samples as
	\begin{equation*}
		Y = \varphi(Z) + \theta ZU + U,\qquad 
		\begin{pmatrix}
			Z \\
			W \\
			U 
		\end{pmatrix} \sim_{i.i.d.} N\left(\begin{pmatrix}
			0 \\
			0 \\
			0
		\end{pmatrix}, \begin{pmatrix}
			1 & 0.4 & 0.3 \\
			0.4 & 1 & 0 \\
			0.3 & 0 & 1
		\end{pmatrix}\right).
	\end{equation*}
	We set $\varphi(x) = \cos(x)$ and consider samples of size $n=500$ and $n=1,000$ observations; see Supplementary Material for additional simulation results. Note that the degree of separability of unobservables is governed by $\theta\in\R$. The separable model corresponds to $\theta = 0$, while any $\theta\ne0$ corresponds to the alternative nonseparable model. It is worth mentioning that under $H_1$, the nonparametric IV regression does not estimate consistently the nonseparable structural function $(z,u)\mapsto \cos(z)+\theta zu$, which depends on unobservables. The nonparametric IV regression estimates instead the function $z\mapsto\phi(z)$ solving the functional equation $\E[Y|W]=\E[\phi(Z)|W]$. The difference between the two functions is precisely what gives the power to the test.
	
	We set the number of Monte Carlo replications and the number of bootstrap replications to $1,000$ through all our experiments. We also discretize all continuous quantities on the grid of 100 equidistant points in $[-4,4]$. The estimates $\hat r$ and $\hat T$ in equation~(\ref{eq:r_and_T}) are obtained using the sixth-order Epanechnikov kernel. The corresponding bandwidth parameters are computed using Silverman's rule of thumb: $h_z = 3.53\hat\sigma_zn^{-1/13}$ and $h_w = 3.53\hat\sigma_wn^{-1/13}$, where $\hat\sigma_z$ and $\hat\sigma_w$ are sample standard deviations of observed $Z$ and $W$. This choice satisfies Assumption~\ref{as:tuning} and requires that the regularization parameter is $\alpha_n\sim n^{-c_2}$ with $c_2\in(0.5, 11/13)$. To satisfy this requirement, we set $\alpha_n = n^{-4/5}$. 
	
	We look at the distributions of Kolmogorov-Smirnov and Cram\'{e}r-von Mises statistics, computed respectively as
	\begin{equation*}
		T_{\infty,n} = \sup_{u,w}\left|\mathbb{G}_n(u,w)\right|\qquad \text{and}\qquad T_{2,n} = \iint|\mathbb{G}_n(u,w)|^2\dx \hat F_{\hat UW}(u,w),
	\end{equation*}
	where $\mathbb{G}_n(u,w) = \sqrt{n}(\hat F_{\hat UW}(u,w) - \hat F_{\hat U}(u)\hat F_W(w))$ and the empirical distribution functions are computed as in equation~(\ref{eq:edf}). Lastly, we use the adaptive rule of \cite{bickel2008choice} to estimate the size of the subsample. The rule consists of choosing $\hat m_j = \argmin_{j\geq 0}\sup_x|F_j^*(x) - F_{j+1}^*(x)|$, where $F_j^*$ is the empirical distribution of the simulated statistics using a subsample of size $m_j = [q^{j}n],j=0,1,2,\dots,4$, $[a]$ is integer part of $a$, and $q=0.5$.
	\begin{figure}
		\centering
		\begin{subfigure}[b]{0.49\textwidth}
			\includegraphics[width=\textwidth]{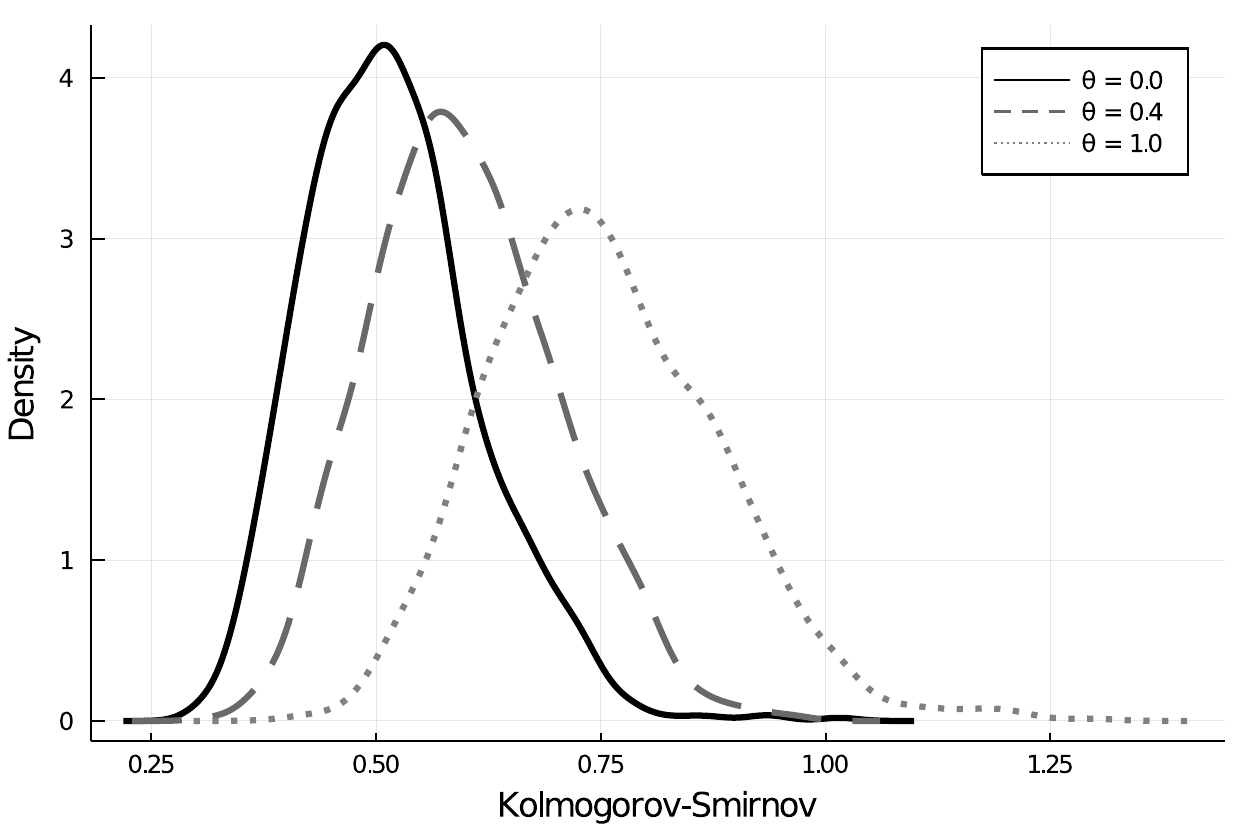}
			\caption{Sample size: $n=500$}
		\end{subfigure}
		\begin{subfigure}[b]{0.49\textwidth}
			\includegraphics[width=\textwidth]{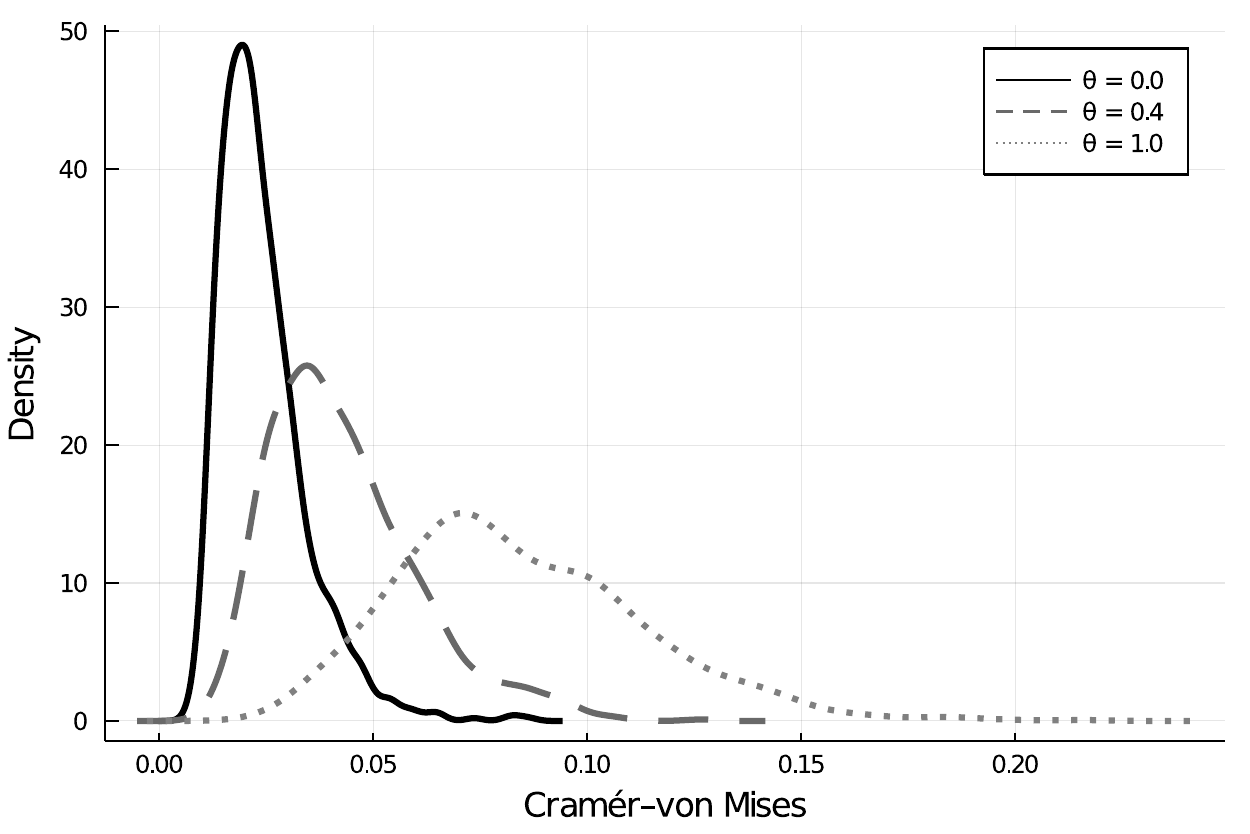}
			\caption{Sample size: $n=500$}
		\end{subfigure}
		\begin{subfigure}[b]{0.49\textwidth}
			\includegraphics[width=\textwidth]{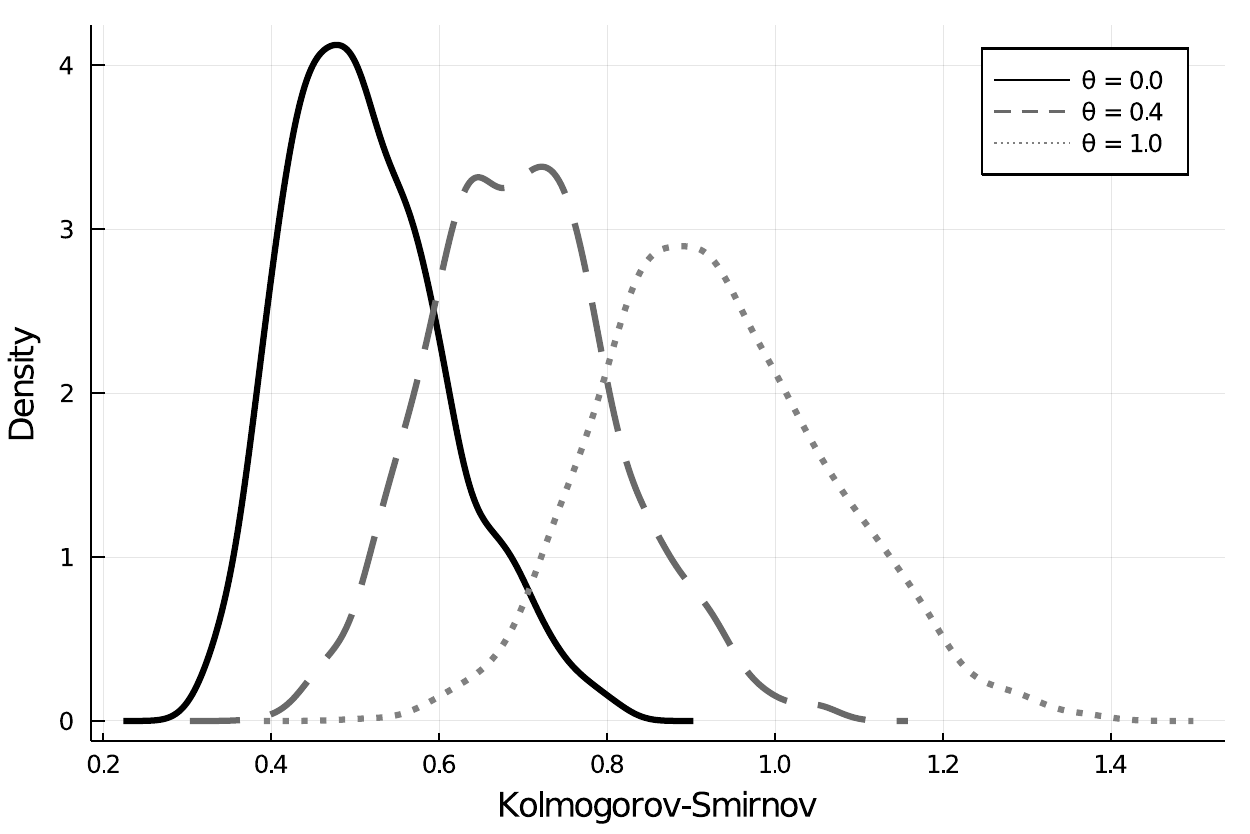}
			\caption{Sample size: $n=1,000$}
		\end{subfigure}
		\begin{subfigure}[b]{0.49\textwidth}
			\includegraphics[width=\textwidth]{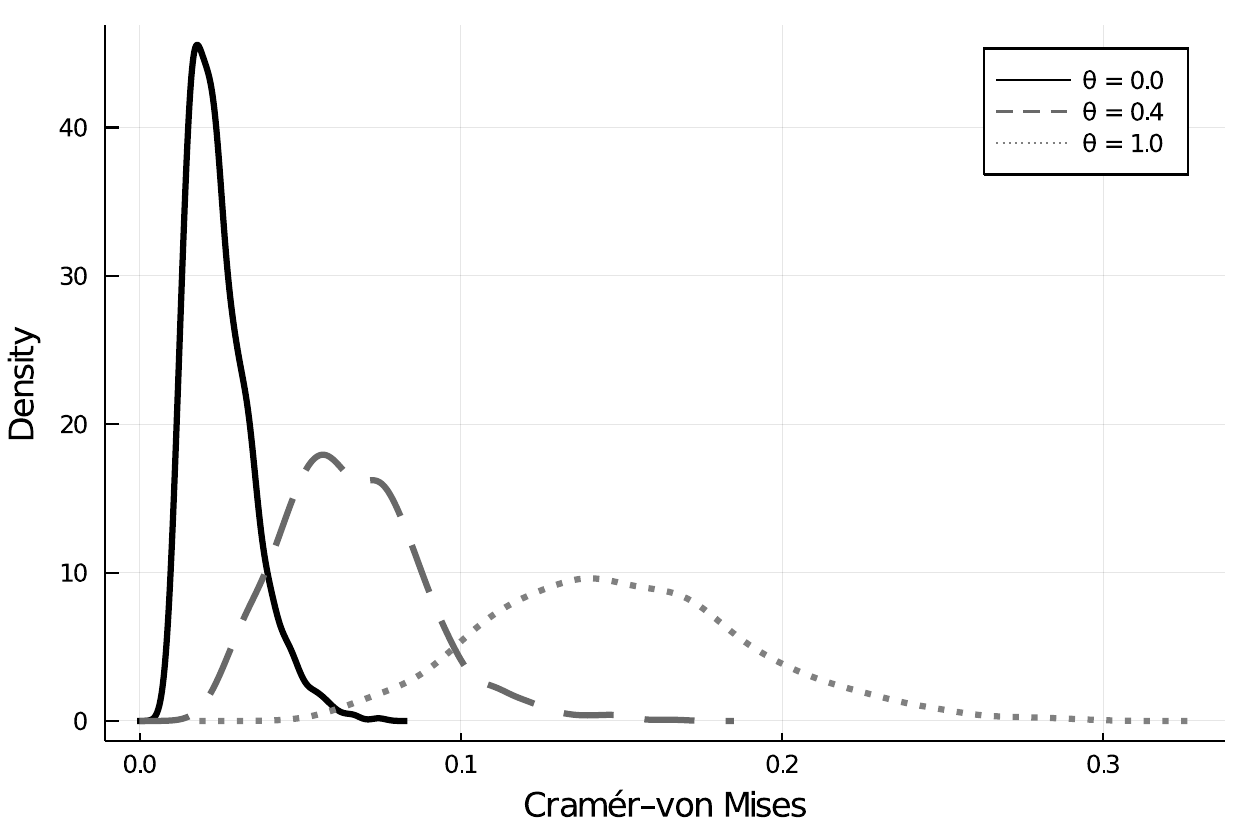}
			\caption{Sample size: $n=1,000$}
		\end{subfigure}
		\caption{Finite-sample distribution of the test. The figure shows density estimates for the Kolmogorov-Smirnov and Cram\'{e}r-von Mises statistics under $H_0$, $\theta=0$ (solid line), and two alternative hypotheses: $\theta=0.4$ (dashed line) and $\theta=1$ (dotted line).}
		\label{fig:1}
	\end{figure}
	
	Figure~\ref{fig:1} shows the distribution of the test statistics under the null hypothesis and the two alternative hypotheses for different sample sizes. The two distributions are sufficiently distinct once the alternative hypothesis becomes more separated from the null hypothesis.
	
	We plot in Figure~\ref{fig:2} the power curves when the level of the test is fixed at $5\%$. The power of the test increases once alternative hypotheses become more distant from the null hypothesis. The Cram\'{e}r-von Mises test seems to have higher power for the class of considered alternatives. We can also see that the figure illustrates the consistency of the test in the sense that its power becomes closer to one as the sample size increases under the alternative hypotheses.
	\begin{figure}
		\centering
		\begin{subfigure}[b]{0.49\textwidth}
			\includegraphics[width=\textwidth]{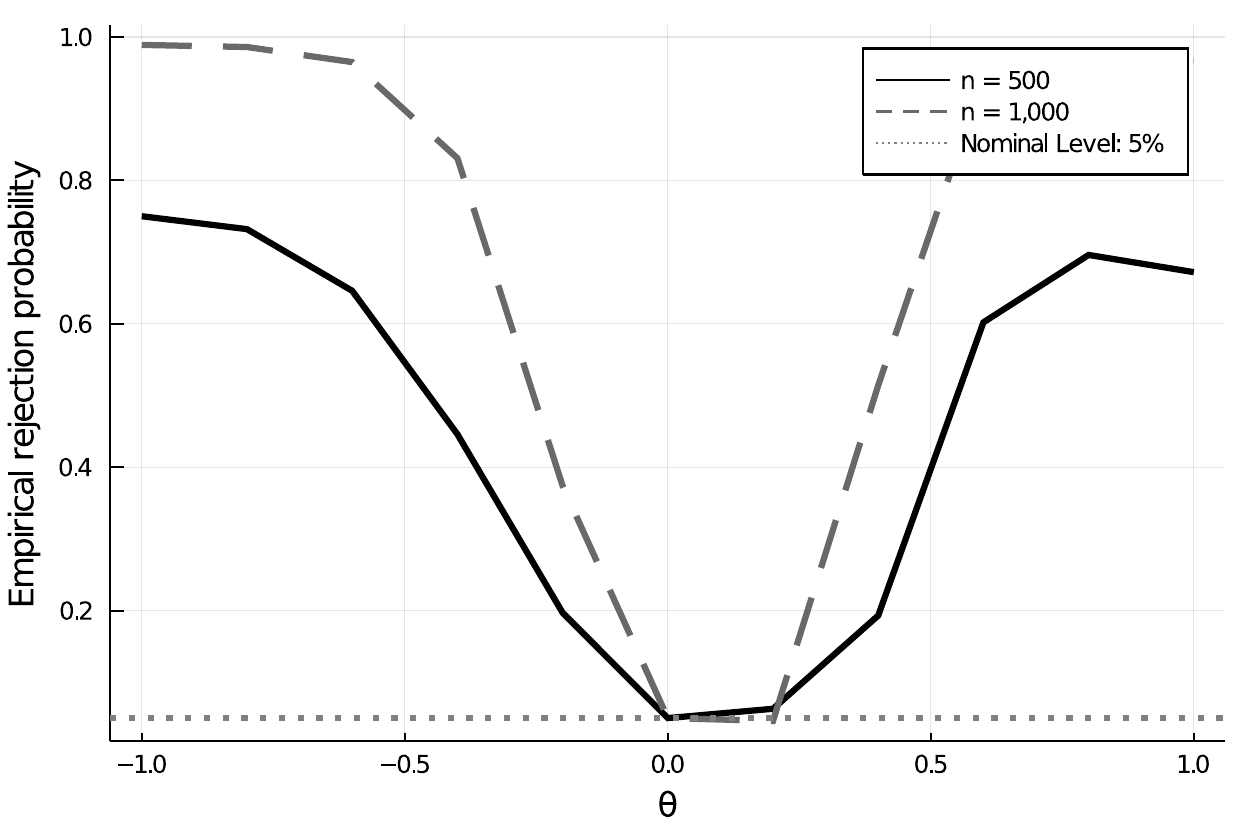}
			\caption{Kolmogorov-Smirnov test}
		\end{subfigure}
		\begin{subfigure}[b]{0.49\textwidth}
			\includegraphics[width=\textwidth]{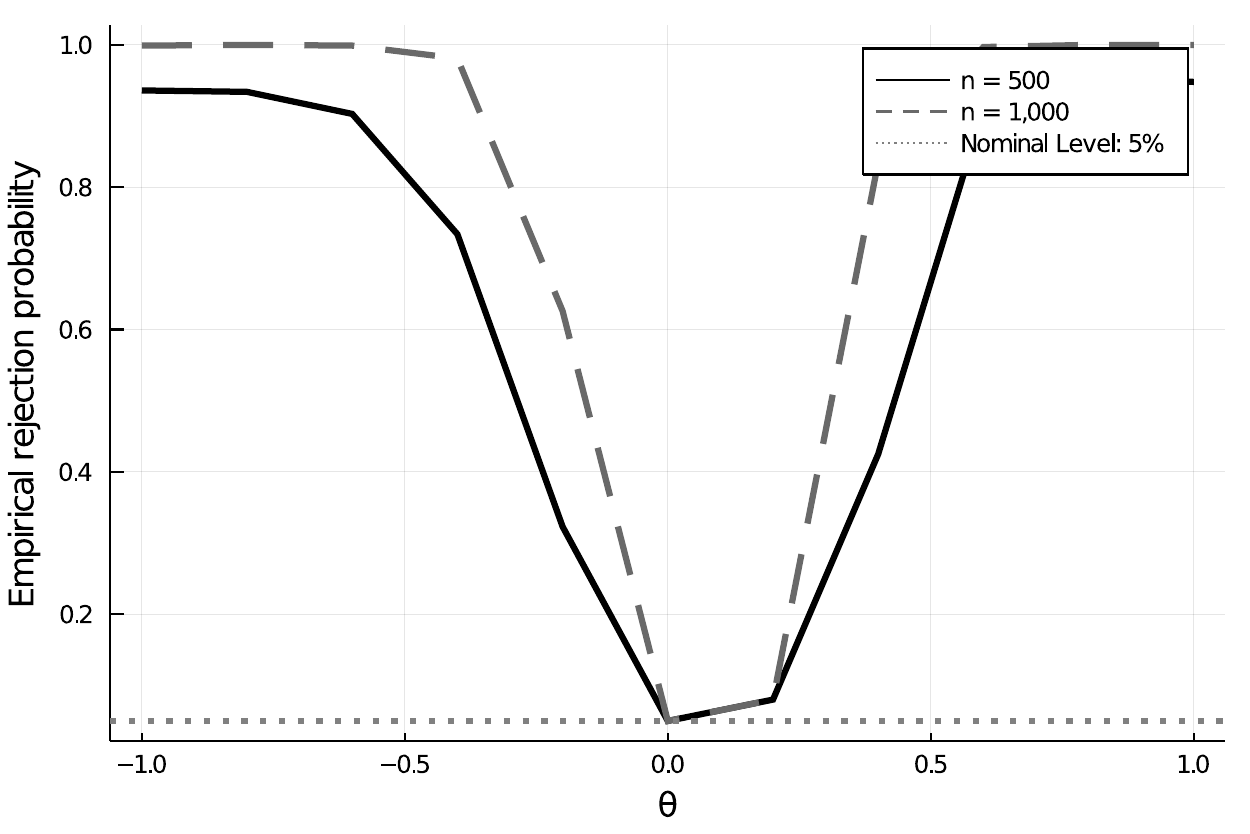}
			\caption{Cram\'{e}r-von Mises test}
		\end{subfigure}
		\caption{Power curves. The figure shows empirical rejection probabilities as a function of degree of separability $\theta$ for samples of size $n=500$ (solid line) and $n=1,000$ (dashed line). The value $\theta=0$ corresponds to the separable model, while $\theta\ne 0$ are deviations from separability. The nominal level of the test is set at $5\%$.}
		\label{fig:2}
	\end{figure}
	
	In Figure~\ref{fig:bootstrap_1000}, we explore the performance of the bootstrap. We plot the exact finite sample distribution of both test statistics and the distribution of bootstrapped statistics under $H_0$. In panels (a) and (b), we plot the distribution of the naive bootstrap, drawing a sample of size $n$ randomly with replacements from $(Y_i,Z_i,W_i)_{i=1}^n$. In panels (c) and (d), we plot the distribution of the $m$ out of $n$ bootstrap. The naive bootstrap fails and does not mimic the distribution of the Kolmogorov-Smirnov/Cram\'{e}r-von Mises statistics. The distribution of the $m$ out of $n$ bootstrap, on the other hand, is close to the finite sample distributions of both statistics. We also observe that the adaptive choice seems to work slightly better for the Cram\'{er}-von Mises statistics.
	\begin{figure}
		\centering
		\begin{subfigure}[b]{0.45\textwidth}
			\includegraphics[width=\textwidth]{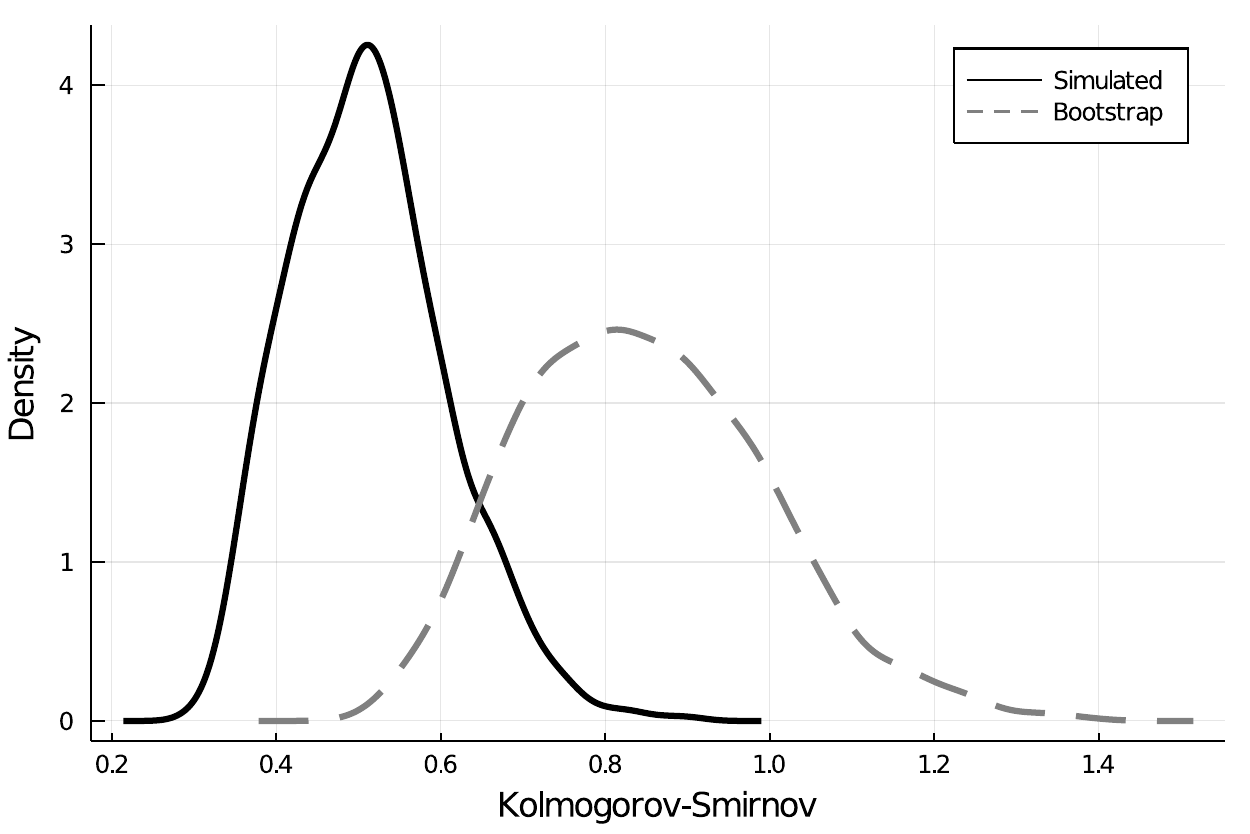}
			\caption{Naive bootstrap: $m_n=n$}
		\end{subfigure}
		\begin{subfigure}[b]{0.45\textwidth}
			\includegraphics[width=\textwidth]{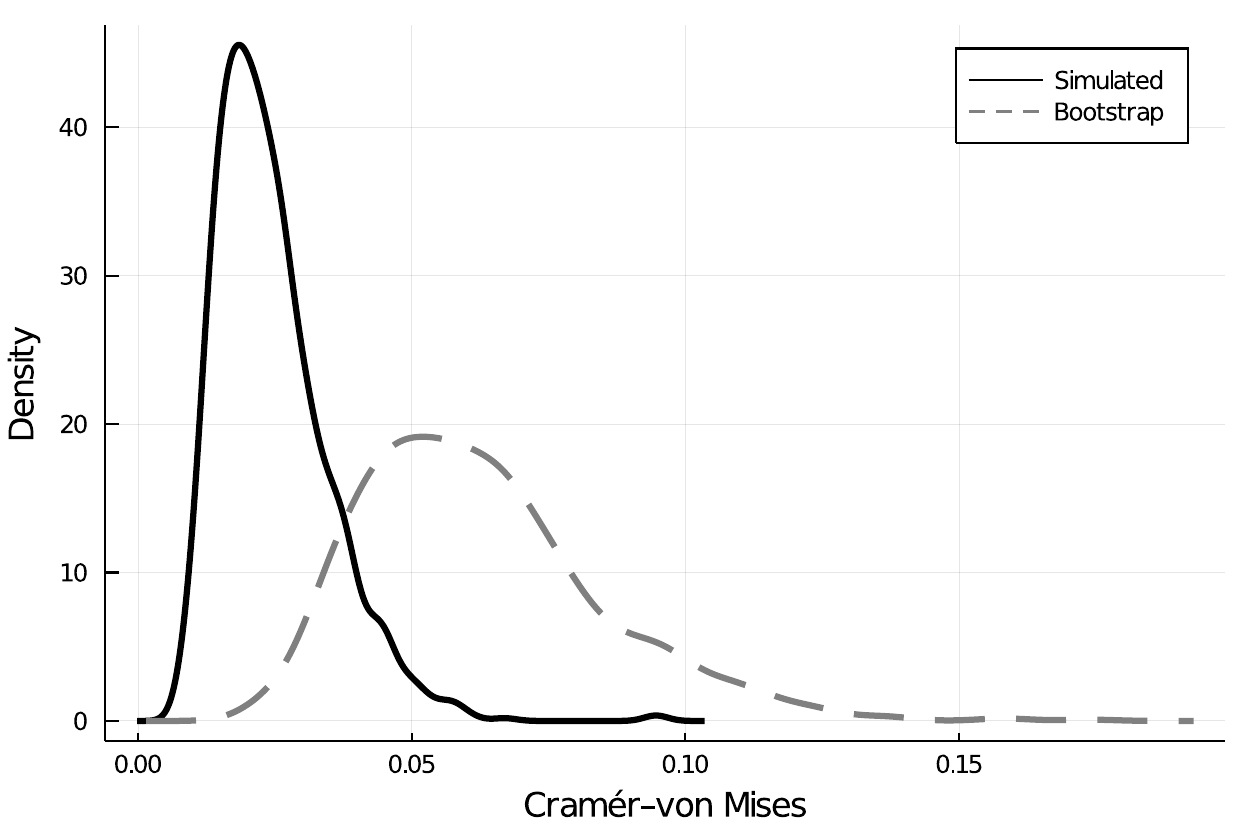}
			\caption{Naive bootstrap: $m_n=n$}
		\end{subfigure}
		\begin{subfigure}[b]{0.45\textwidth}
			\includegraphics[width=\textwidth]{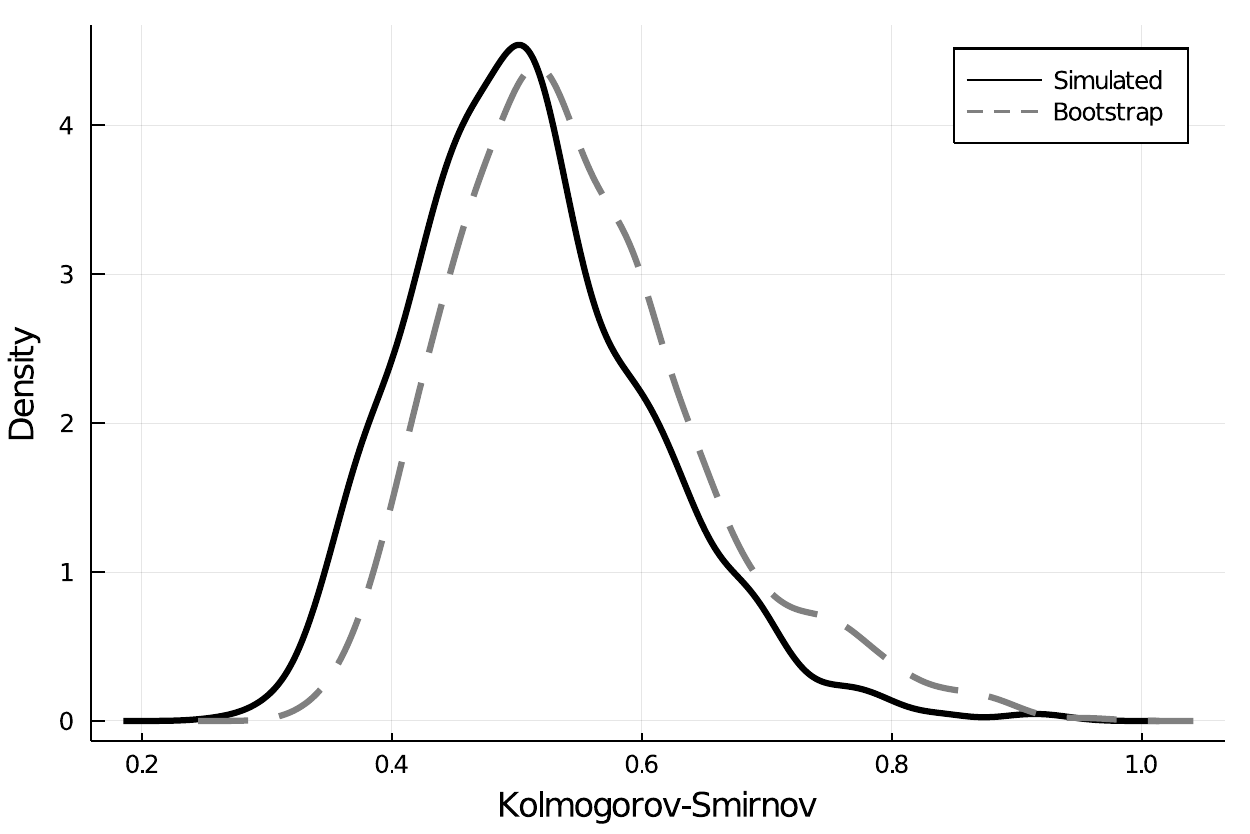}
			\caption{Adaptive choice of $m_n$}
		\end{subfigure}
		\begin{subfigure}[b]{0.45\textwidth}
			\includegraphics[width=\textwidth]{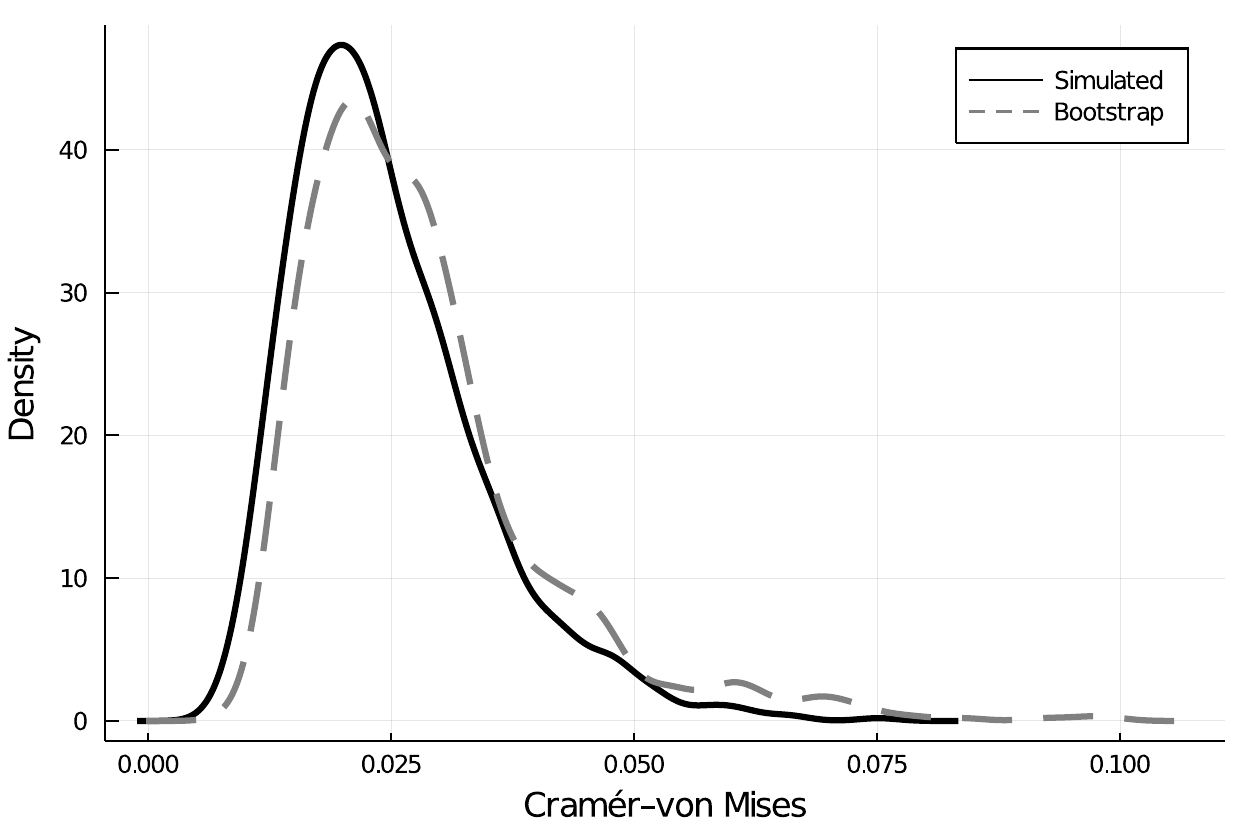}
			\caption{Adaptive choice of $m_n$}
		\end{subfigure}
		\caption{Naive bootstrap vs. $m$ out of $n$ bootstrap. The figure shows density estimates of simulated and bootstrapped Kolmogorov-Smirnov and Cram\'{e}r-von Mises statistics under $H_0$. We compare the naive ($n$ out of $n$) bootstrap and the $m$ out of $n$ bootstrap with the adaptive choice of $m$ for the sample of size $1,000$ observations.}
		\label{fig:bootstrap_1000}
	\end{figure}
	
	\section{Are Engel curves separable?}\label{sec:empirics}
	Engel curves are fundamental for the analysis of consumers' behavior and have implications for the aggregate economic outcomes. The Engel curve describes the relationship between the demand for a particular commodity and the household's budget. Interesting applications of the estimated Engel curves include a measurement of welfare losses associated with tax distortions in \cite{banks1997quadratic}, an estimation of the growth and the inflation in \cite{nakamura2014chinese}, or an estimation of the income inequality across countries in \cite{almaas2012international}. The nonparametric IV approach to the estimation of Engel curves is pioneered in the seminal paper \cite{blundell2007semi} who focus on the estimation of Engel curves in the UK.
	
	We draw a dataset from the 2015 US Consumer Expenditure Survey; see \cite{babii2016a} for the estimated Engel curves with the uniform confidence bands using this dataset. We restrict our attention to married couples with a positive income during the last 12 months, yielding 10,055 observations. The dependent variable is a share of expenditures on a particular commodity while the endogenous regressor is a natural logarithm of the total expenditures. We instrument the expenditures using the gross income. In particular, \cite{blundell2007semi} point out that the gross income will be exogenous for consumption expenditures assuming that heterogeneity in earnings is not related to unobserved preferences over consumption; see also \cite{chen2018optimal} and \cite{babii2016a}.
	
	In Table~\ref{tab:engel}, we report the $m$ out of $n$ bootstrap p-values, with the adaptive choice of $m$; see Section~\ref{sec:mc} for more details on the practical implementation of tests. We report results for both the Kolmogorov-Smirnov (KS) and the Cram\'{e}r-von Mises (CvM) tests. Remarkably, the 5\% level tests reject the separability for all commodities with the exception for the Entertainment (KS) and Reading (CvM). Moreover, the 1\% level tests reject separability in all cases, except for Reading (CvM) and Transportation (CvM). This suggests that Engel curves for these commodities may exhibit substantial heterogeneities in unobservables.
	
	\begin{table}
		\centering
		\caption{Testing separability of Engel curves. The table shows $m$ out of $n$ bootstrap p-values of Kolmogorov-Smirnov and Cram\'{e}r-von Mises tests for 13 commodities.}
		\begin{tabular}{lcc|clcc}
			Commodity & KS & CvM & Commodity & KS & CvM  \\	\hline
			Food home & 0.00 & 0.00 & Gas and oil & 0.00 & 0.00  \\
			Food away & 0.00 & 0.00 & Personal care & 0.00 & 0.00  \\
			Clothing & 0.00 & 0.00 & Health & 0.00 & 0.00  \\
			Tobacco & 0.00 & 0.00 & Insurance & 0.00 & 0.00 \\	
			Alcohol & 0.00 & 0.00 & Reading & 0.00 & 0.53  \\
			Trips & 0.00 & 0.00 & Transportation & 0.01 & 0.03  \\      
			Entertainment & 0.08 & 0.00 \\
		\end{tabular}
		\label{tab:engel}
	\end{table}  
	
	\section{Conclusions}\label{sec:conclusions}
	This paper offers a new perspective on the separability of unobservables in economic models with endogeneity. Starting from the nonseparable model where the instrumental variable is independent of unobservables, our first contribution is to develop a novel fully nonparametric separability test. The test is based on the estimation of a separable nonparametric IV regression and the verification of the independence restriction imposed by the nonseparable IV model. To obtain a large sample approximation to the distribution of our test statistics, we develop a novel uniform asymptotic expansions of the empirical distribution function of nonparametric IV residuals and obtain new results for the Tikhonov regularization in Sobolev spaces. We show that, despite the uncertainty coming from an ill-posed inverse nonparametric IV regression, the empirical distribution function of residuals and the residual-based independence empirical process still satisfy the Donsker central limit theorem. In contrast to the nonparametric regression without endogeneity, we find that the parameter uncertainty affects the asymptotic distribution of the residual-based independence tests, which are highly nonstandard. In our Monte Carlo experiments, we find that the bootstrap fails in approximating the distribution of the test statistics under the null hypothesis; hence we rely on the $m$ out of $n$ bootstrap (or subsampling) procedure to compute its critical values.
	
	Using the 2015 US Consumer Expenditure Survey data, we find that the $1\%$ level test rejects the separability of Engel curves for most of the commodities. This indicates that the Engel curves may be heterogeneous in unobservables and that the nonseparable modeling of Engel curves may be useful, see, e.g., \cite{blundell2017nonparametric} for the estimation of nonseparable demand functions. 
	
	The paper offers several other directions for future research. First, it might be interesting to test the separability of unobservables in other structural relations that are commonly estimated using the additively separable models in the empirical practice, such as the production function, the labor supply function, the demand function, or the wage equation. Second, given the plethora of residual-based specification tests for regression models without endogeneity, our results could also be used to develop similar tests for econometric models with endogeneity; see \cite{pardo2007testing} and \cite{escanciano2018asymptotic}.
	
	\section*{Acknowledgement}
	This work was supported by the French National Research Agency under Grant ANR-19-CE40-0013-01/ExtremReg project. We thank Ivan Canay, Tim Christensen, Elia Lapenta, Pascal Lavergne, Thierry Magnac, Nour Meddahi, and Ingrid Van Keilegom for helpful discussions. All remaining errors are ours.
	
	\newpage
	\setcounter{section}{0}
	\setcounter{equation}{0}
	\setcounter{table}{0}
	\setcounter{figure}{0}
	\renewcommand{\theequation}{A.\arabic{equation}}
	\renewcommand\thetable{A.\arabic{table}}
	\renewcommand\thefigure{A.\arabic{figure}}
	\renewcommand\thesection{A.\arabic{section}}
	\renewcommand\thetheorem{A.\arabic{theorem}}

	\begin{center}
		{\LARGE\textbf{APPENDIX: ADDITIONAL RESULTS AND PROOFS}}	
	\end{center}
	\bigskip
	\paragraph{Notation.} For two sequences $(a_n)_{n\in\Nn}$ and $(b_n)_{n\in\Nn}$, we denote $a_n\lesssim b_n$ if $a_n=O(b_n)$ and $a_n\sim b_n$ if both $a_n\lesssim b_n$ and $b_n\lesssim a_n$. For two sequences of random variables $(X_n)_{n\in\Nn}$ and $(Y_n)_{n\in\Nn}$, we denote $X_n\lesssim_P Y_n$ for $X_n = O_P(Y_n)$. For a bounded linear operator $T:\mathcal{X}\to\mathcal{Y}$ on normed spaces, we use $\|T\|_{\mathrm{op}} = \inf\{c\geq 0:\; \|Tx\|\leq c\|x\|,\forall x\in \mathcal{X}\}$ to denote its operator norm, where with some abuse of notation, we use $\|.\|$ to denote the norm of both spaces. 
	
	\section{Tikhonov regularization in Sobolev spaces}\label{appendix:regularization_sobolev}
	This section discusses convergence rates for the Tikhonov-regularized estimator in Sobolev spaces. The following result extends \cite{carrasco2013asymptotic}, Proposition 3.1 to the case of the unknown operator.
	\begin{theorem}\label{thm:risk_hs}
		Suppose that Assumption~\ref{as:hilbert_scale} is satisfied, $\|\hat T - T\|_{\mathrm{op}}^2 \lesssim_P\alpha_n$, and $2s\geq b-a$. Then for every $c\in[0,s]$
		\begin{equation*}
			\left\|\hat \varphi - \varphi\right\|^2_c \lesssim_P \alpha_n^{-\frac{a+c}{a+s}}\left\|\hat r - \hat T\varphi\right\|^2 + \alpha_n^{\frac{b-c}{a+s}}.
		\end{equation*}
	\end{theorem}
	It is worth emphasizing that this result is not specific to the nonparametric IV regression and can be applied to a generic ill-posed inverse problem $T\varphi = r$, where $(T,r)$ is estimated with $(\hat T,\hat r)$. Moreover, in the case of nonparametric IV regression, it can be easily applied to nonparametric/machine learning estimators $(\hat T,\hat r)$ other than the kernel smoothing. Next, we specialize the generic result of Theorem~\ref{thm:risk_hs} to the nonparametric IV regression with $(T,r)$ estimated via kernel smoothing, see equation~(\ref{eq:r_and_T}).
	\begin{corollary}\label{cor:rates_npiv}
		Suppose that Assumptions~\ref{as:hilbert_scale} and \ref{as:dgp} are satisfied, $\frac{1}{nh_n^{p+q}}\vee h_n^{2t} = O\left(\alpha_n\right)$, and $2s\geq b-a$. Then for every $c\in[0,s]$
		\begin{equation*}
			\|\hat{\varphi} - \varphi\|^2_c = O_P\left(\alpha_n^{-\frac{a+c}{a+s}}\left(\frac{1}{nh_n^q} + h_n^{2t}\right) + \alpha_n^{\frac{b-c}{a+s}}\right).
		\end{equation*}
	\end{corollary}
	
	\section{Distribution of nonparametric IV residuals}
	In this section, we present results on the weak convergence of the empirical distribution of nonparametric IV residuals. These results are used to obtain the large sample approximation to the distribution of independence tests and are of independent interest.
	\begin{theorem}\label{thm:residuals}
		Suppose that Assumptions~\ref{as:hilbert_scale}, \ref{as:dgp}, and \ref{as:smoothness} (i), and \ref{as:tuning} are satisfied. Then
		\begin{equation*}\label{eq:expansion}
			\sqrt{n}(\hat F_{\hat U}(u) - F_U(u)) = \frac{1}{\sqrt{n}}\sum_{i=1}^n\left\{\one_{\{U_i\leq u\}} - F_U(u) + U_i\left[T(T^*T)^{-1}f_{UZ}(u,.)\right](W_i)\right\} + o_P(1)
		\end{equation*}
		uniformly over $u\in\R$.
	\end{theorem}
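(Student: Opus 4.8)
The plan is to decompose the residual empirical process into a leading linear term, a bias term coming from the estimation error $\hat\varphi-\varphi$, and higher-order remainders, and to show that only the leading term survives asymptotically. Write $\hat U_i = U_i - (\hat\varphi(Z_i)-\varphi(Z_i)) =: U_i - \hat\Delta(Z_i)$ where $\hat\Delta = \hat\varphi_{\alpha_n}-\varphi$. Then $\one_{\{\hat U_i\le u\}} = \one_{\{U_i \le u + \hat\Delta(Z_i)\}}$, and the first step is the standard Akritas–Van Keilegom-type expansion: conditionally on the data,
$$
\frac{1}{\sqrt n}\sum_{i=1}^n \left(\one_{\{U_i\le u+\hat\Delta(Z_i)\}} - F_U(u)\right) = \frac{1}{\sqrt n}\sum_{i=1}^n\left(\one_{\{U_i\le u\}} - F_U(u)\right) + \sqrt n\, \E\!\left[\one_{\{U_i\le u+\hat\Delta(Z_i)\}}-\one_{\{U_i\le u\}}\,\big|\,\hat\Delta\right] + R_n(u),
$$
where $R_n(u)$ is an empirical-process oscillation term. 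First I would control $R_n$ by a stochastic equicontinuity / maximal-inequality argument over the class of indicator functions indexed by $u$ and by the (random, but Sobolev-bounded) perturbation $\hat\Delta$: Assumption~\ref{as:tuning} with $s>p/2$ forces $H^s(\R^p)\hookrightarrow C_0(\R^p)$ (Sobolev embedding), so $\hat\Delta$ lives in a compact, hence Donsker, class of uniformly small functions by Proposition~\ref{prop:rates_npiv}, and the bracketing entropy of $\{(y,z)\mapsto \one_{\{y\le u+\delta(z)\}}\}$ is controlled. This gives $\sup_u|R_n(u)| = o_p(1)$ in outer probability.

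The second step is to linearize the conditional-bias term. By a first-order Taylor expansion in $u$ using the density $f_{UZ}$ and Assumption~\ref{as:cdf}(i),
$$
\sqrt n\,\E\!\left[\one_{\{U\le u+\hat\Delta(Z)\}}-\one_{\{U\le u\}}\,\big|\,\hat\Delta\right] = \sqrt n \int f_{UZ}(u,z)\,\hat\Delta(z)\,\dx z + \text{(remainder)},
$$
and the remainder is $O(\sqrt n\,\|\hat\Delta\|_\infty^2)$ uniformly in $u$, which vanishes since $n\|\hat\Delta\|_\infty^4 \lesssim n\|\hat\Delta\|_s^4 \to 0$ under the rate conditions. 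So I am left to show that $\sqrt n\langle f_{UZ}(u,\cdot),\hat\Delta\rangle$ equals the asserted linear functional plus $o_p(1)$ uniformly in $u$. Here I plug in the explicit form $\hat\varphi_{\alpha_n} = L^{-s}(\alpha_n I + \hat T_s^*\hat T_s)^{-1}\hat T_s^*\hat r$ and use the decomposition of $\hat r - \hat T\varphi$ into a "variance" part $\frac{1}{n}\sum_i U_i K_w(h_n^{-1}(W_i-\cdot))/h_n^q$ and a smoothing-bias part. The bias part contributes $O(\sqrt n (h_n^b/\alpha_n + \ldots)) = o_p(1)$ by Assumption~\ref{as:tuning} ($n\alpha_n^{b/a}\to 0$ and $h_n/\alpha_n\to 0$), via the operator bounds already used in Proposition~\ref{prop:rates_npiv}. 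For the variance part, Assumption~\ref{as:cdf}(iii) — the source condition $f_{UZ}(u,\cdot) = T^*(T_sT_s^*)^\rho\psi(u,\cdot)$ with $\rho$ large enough — is what lets me replace the regularized inverse $(\alpha_n I + \hat T_s^*\hat T_s)^{-1}$ acting against $f_{UZ}(u,\cdot)$ by the unregularized $(T^*T)^{-1}$ up to $o_p(1)$ uniformly in $u$ and simultaneously replace $\hat T$ by $T$ in the relevant places; the spectral calculus gives $\|(\alpha_n I + T_s^*T_s)^{-1}T_s^* - (T_sT_s^*)^{-1/2}(\cdot)\|$-type bounds that are $o(1)$ precisely because $\rho > q/(4(a+s)) \vee 1/2$. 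After these substitutions the linear functional collapses to $\frac{1}{\sqrt n}\sum_i U_i\,[T(T^*T)^{-1}f_{UZ}(u,\cdot)](W_i)$, and one checks this is itself a well-behaved (Donsker) empirical process in $u$ using $\sup_u\|f_{UZ}(u,\cdot)\|<\infty$.

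The main obstacle I expect is the \emph{uniformity in $u$} of the second-step argument: the source-condition/regularization-bias estimates must hold simultaneously for all $u\in\R$, which requires the bounds $\sup_u\|\psi(u,\cdot)\|<\infty$ and $\sup_u\|f_{UZ}(u,\cdot)\|<\infty$ to be converted into uniform operator-norm rates, and it requires the empirical-process term $\frac{1}{\sqrt n}\sum_i U_i[\,\cdot\,](W_i)$ to be handled as a process indexed by the function class $\{T(T^*T)^{-1}f_{UZ}(u,\cdot):u\in\R\}$, whose complexity (bracketing numbers) must be shown finite — this is where Assumption~\ref{as:cdf}(i) (Lipschitz-in-$u$ control of $f_{UZ}$) does the work. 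A secondary technical point is that all the "replace $\hat T$ by $T$" steps interact with the perturbation $\hat\Delta$ being data-dependent, so the equicontinuity argument in step one and the linearization in step two cannot be fully decoupled; I would handle this by conditioning on a high-probability event $\{\|\hat\Delta\|_s \le \varepsilon_n\}$ with $\varepsilon_n\to 0$ at the Proposition~\ref{prop:rates_npiv} rate and running both arguments on that event, which is what forces the outer-probability ($\mathrm{Pr}^*$) formulation of the conclusion.
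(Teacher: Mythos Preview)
Your proposal is essentially the paper's approach: Lemma~\ref{lemma:remainder} carries out exactly the step-one equicontinuity argument you sketch (bracketing the class $\{\one_{(-\infty,u+\Delta]}:(u,\Delta)\in\R\times H^s_M\}$ and using $\|\hat\Delta\|_s\to 0$), and the theorem's proof then performs the Taylor linearization and decomposes $\hat\varphi-\varphi$ into the leading sum $\frac{1}{n}\sum_i U_i(T^*T)^{-1}f_{ZW}(\cdot,W_i)$ plus remainders $R_{1,n},\dots,R_{7,n}$, most of which are dispatched by the operator bounds from Proposition~\ref{prop:rates_npiv} as you indicate.

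One point to sharpen: the replacement of $(\alpha_n I+T_s^*T_s)^{-1}$ by $(T_s^*T_s)^{-1}$ on the variance part (the paper's $R_{1,n}$) is \emph{not} a purely deterministic spectral-calculus bound. The quantity $\sqrt{n}\langle R_{1,n},f_{UZ}(u,\cdot)\rangle$ is itself a centered empirical process $\frac{1}{\sqrt n}\sum_i U_i g_n(u,W_i)$ indexed by $u$, so an $o(1)$ bound on $\|g_n(u,\cdot)\|$ gives only pointwise degeneracy. The paper closes this by showing, via the source condition, that the functions $g_n(u,\cdot)$ sit in a \emph{fixed} Sobolev ball $H^r_M$ with $r\in(q/2,\,2\rho(a+s))$ (this is where $\rho>q/(4(a+s))$ enters, making the ambient class Donsker) while their $\rho_\pr$-seminorm is $O(\alpha_n^{\rho\wedge 1})$ (this is where $\rho>1/2$ enters); asymptotic equicontinuity of the fixed Donsker class then yields the uniform $o_p(1)$. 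Your last paragraph recognizes the Donsker issue for the \emph{leading} term, but the same mechanism is needed for this remainder, and that is arguably the most delicate step.
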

	\begin{proof}
		By Lemma~\ref{lemma:remainder}, the following expansion holds uniformly in $u\in\R$
		\begin{equation*}
			\sqrt{n}(\hat F_{\hat U}(u) - F_U(u)) = \sqrt{n}(\hat F_U(u) - F_U(u)) + \sqrt{n}\left(\Pr\left(U\leq u + \hat\Delta(Z)|\mathscr{X}\right) - F_U(u)\right) + o_P(1).
		\end{equation*}
		By Taylor's theorem, there exists some $\tau\in[0,1]$ such that
		\begin{equation*}
			\begin{aligned}
				& \sqrt{n}\left(\Pr\left(U\leq u + \hat\Delta(Z)|\mathscr{X}\right) - \Pr(U\leq u)\right) \\
				& = \sqrt{n}\int\left\{\int_{-\infty}^{u+\hat\Delta(z)}f_{UZ}(v,z)\dx v - \int_{-\infty}^uf_{UZ}(v,z)\dx v\right\}\dx z \\
				& = \sqrt{n}\int\left\{f_{UZ}(u,z)\hat\Delta(z) + \frac{1}{2}\partial_uf_{UZ}(u+\tau\hat\Delta(z),z)\hat\Delta^2(z)\right\}\dx z \\
				& = \sqrt{n}\langle \hat\varphi - \varphi,f_{UZ}(u,.)\rangle + \sqrt{n}\frac{1}{2}\int\partial_uf_{UZ}(u+\tau\hat\Delta(z),z)\hat\Delta^2(z)\dx z \\
				& \triangleq T_{1n}(u) + T_{2n}(u).
			\end{aligned}
		\end{equation*}
		By Lemma~\ref{lemma:contribution} in the Supplementary Material,
		\begin{equation*}
			\begin{aligned}
				T_{1n}(u) = \frac{1}{\sqrt{n}}\sum_{i=1}^nU_i\left[T(T^*T)^{-1}f_{UZ}(u,.)\right](W_i) + o_P(1),
			\end{aligned}
		\end{equation*}
		while under Assumptions~\ref{as:smoothness} (i) and \ref{as:tuning}
		\begin{equation*}
			\begin{aligned}
				\|T_{2n}\|_\infty & \leq \|\partial_uf_{UZ}\|_\infty\sqrt{n}\|\hat\varphi -\varphi\|^2 = o_P(1).
			\end{aligned}
		\end{equation*}	
		
		Combining all estimates, we obtain uniformly in $u\in\R$
		\begin{equation*}
			\sqrt{n}(\hat F_{\hat U}(u) - F_U(u)) = \sqrt{n}(\hat F_U(u) - F_U(u)) + \frac{1}{\sqrt{n}}\sum_{i=1}^nU_i[T(T^*T)^{-1}f_{UZ}(u,.)](W_i) + o_P(1).
		\end{equation*}
	\end{proof}
	
	As a consequence of Theorem~\ref{thm:residuals}, we obtain the following Donsker-type central limit theorem for the empirical distribution of nonparametric IV residuals.
	\begin{corollary}\label{corr:clt}
		Suppose that assumptions of Theorem~\ref{thm:residuals} are satisfied. Then
		\begin{equation*}
			\sqrt{n}(\hat F_{\hat U} - F_U) \leadsto \mathbb{G}\qquad \mathrm{in}\qquad L_\infty(\R),
		\end{equation*}
		where $\mathbb{G}$ is a tight centered Gaussian process with uniformly continuous sample paths and the covariance function
		\begin{equation*}\small
			\begin{aligned}
				(u,u') & \mapsto F_U(u\wedge u') - F_U(u)F_U(u') + \E\left[U^2\left[T(T^*T)^{-1}f_{UZ}(u,.)\right](W)\left[T(T^*T)^{-1}f_{UZ}(u',.)\right](W)\right] \\
				+ & \E\left[\one_{\{U\leq u\}}U\left[T(T^*T)^{-1}f_{UZ}(u',.)\right](W) + \one_{\{U\leq u'\}}U\left[T(T^*T)^{-1}f_{UZ}(u,.)\right](W)\right].
			\end{aligned}
		\end{equation*}
	\end{corollary}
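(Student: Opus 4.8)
The statement follows from Theorem~\ref{thm:residuals} by a functional central limit theorem for an i.i.d.\ empirical process. Write $\kappa_u := T(T^*T)^{-1}f_{UZ}(u,\cdot)$ and, for $X=(Y,Z,W)$ with $U=Y-\varphi(Z)$, set
\begin{equation}
g_u(X) := \one_{\{U\leq u\}} - F_U(u) + U\,\kappa_u(W),\qquad u\in\R.
\end{equation}
By Assumption~\ref{as:dgp} the observations are i.i.d.\ and satisfy $\E[U\mid W]=0$, so $\E[g_u(X)] = \big(F_U(u)-F_U(u)\big) + \E\big[\kappa_u(W)\,\E[U\mid W]\big] = 0$; hence the right-hand side of Eq.~\ref{eq:expansion} is exactly the centered empirical process $n^{-1/2}\sum_{i=1}^n g_u(X_i)$ indexed by the class $\Gc := \{g_u:u\in\R\}$. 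Since adding an $o_p(1)$ remainder in outer probability leaves the limiting law in $L_\infty(\R)$ unchanged, it suffices to prove that $\Gc$ is $\pr$-Donsker and to read off the covariance of the limit.

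For the Donsker property I would split $g_u = g_u^{(1)} + g_u^{(2)}$ with $g_u^{(1)}(X) = \one_{\{U\leq u\}} - F_U(u)$ and $g_u^{(2)}(X) = U\,\kappa_u(W)$. The family $\{g_u^{(1)}:u\in\R\}$ is the classical empirical distribution function class, which is VC--subgraph and hence Donsker. For $\{g_u^{(2)}:u\in\R\}$, the range condition in Assumption~\ref{as:cdf}(iii) makes $\kappa_u$ a well-defined element of $L_2$ with $\sup_{u\in\R}\|\kappa_u\|<\infty$ — using that $T$ is one-to-one, so $T(T^*T)^{-1}T^*$ is the orthogonal projection onto $\overline{\operatorname{ran}(T)}$ and $\|\kappa_u\|\leq\|(T_sT_s^*)^\rho\psi(u,\cdot)\|$; together with the differentiability of $f_{UZ}$ in $u$ from Assumption~\ref{as:cdf}(i) and the decay $f_{UZ}(u,\cdot)\to 0$ as $|u|\to\infty$, this gives that $u\mapsto\kappa_u\in L_2(f_W)$ is uniformly continuous and that $\R$ is totally bounded for the induced semimetric. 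Since $f_W\in L_\infty$ and $\E[U^2\mid W]\leq\bar\sigma^2$ by Assumption~\ref{as:dgp}, the class $\{g_u^{(2)}\}$ admits a square-integrable envelope and its $L_2(\pr)$-bracketing numbers grow at a polynomial rate in $1/\varepsilon$ — brackets being built from a Lipschitz net on a large compact interval together with the tail bound $\|g_u^{(2)}\|_{L_2(\pr)}\to 0$ — so its bracketing entropy integral is finite and the class is Donsker. A sum of two Donsker classes being Donsker, $\Gc$ is $\pr$-Donsker.

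The functional CLT then yields $n^{-1/2}\sum_{i=1}^n g_u(X_i)\leadsto\Gbb$ in $L_\infty(\R)$, where $\Gbb$ is a tight centered Gaussian process with covariance $(u,u')\mapsto\E[g_u(X)g_{u'}(X)]$ and with sample paths uniformly continuous for the variance semimetric. Expanding the product into its four pieces and using $\E[U\mid W]=0$ to kill the two cross terms that pair the deterministic $F_U(u)$ (respectively $F_U(u')$) with $U\kappa_{u'}(W)$ (respectively $U\kappa_u(W)$), one obtains
\begin{equation}
\begin{aligned}
\E[g_u g_{u'}] &= F_U(u\wedge u') - F_U(u)F_U(u') + \E\big[U_1^2\kappa_u(W_1)\kappa_{u'}(W_1)\big] \\
&\quad + \E\big[\one_{\{U_1\leq u\}}U_1\kappa_{u'}(W_1)\big] + \E\big[\one_{\{U_1\leq u'\}}U_1\kappa_u(W_1)\big],
\end{aligned}
\end{equation}
which, recalling $\kappa_u = T(T^*T)^{-1}f_{UZ}(u,\cdot)$, is precisely the covariance displayed in the statement. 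Finally, $F_U$ is continuous since its density is bounded, and $u\mapsto\kappa_u$ is continuous into $L_2(f_W)$; hence the variance semimetric is dominated by a continuous function of $|u-u'|$ on the compactified line, so $\Gbb$ has a version with sample paths uniformly continuous in the usual topology of $\R$, and in particular $\Gbb$ takes values in $L_\infty(\R)$ almost surely. The main obstacle is establishing that the correction class $\{U\kappa_u(W):u\in\R\}$ is Donsker: one has to turn the abstract range condition of Assumption~\ref{as:cdf}(iii) into concrete uniform boundedness and modulus-of-continuity estimates for $u\mapsto\kappa_u$, and to control its behaviour as $|u|\to\infty$ so that the whole line — not merely a compact interval — is a legitimate totally bounded index set; the remaining ingredients (insensitivity of weak convergence to the $o_p(1)$ term, the covariance computation, and uniform continuity of the limiting paths) are routine.
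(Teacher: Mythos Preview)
Your overall architecture is the same as the paper's: reduce to the empirical process in Eq.~\ref{eq:expansion}, split $g_u$ into the indicator part and the correction $U\kappa_u(W)$, and argue that each piece indexes a Donsker class. The covariance computation and the handling of the $o_p(1)$ remainder are fine.

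The gap is in your Donsker argument for $\{U\kappa_u(W):u\in\R\}$. You try to exploit regularity of the map $u\mapsto\kappa_u$ in $L_2$, claiming that differentiability of $f_{UZ}$ in $u$ (Assumption~\ref{as:cdf}(i)) together with the range condition yields uniform (even Lipschitz) continuity of $u\mapsto\kappa_u$. But $\kappa_u=T(T^*T)^{-1}f_{UZ}(u,\cdot)$ is obtained by applying the \emph{unbounded} operator $T(T^*T)^{-1}$ to $f_{UZ}(u,\cdot)$; Lipschitz continuity of $u\mapsto f_{UZ}(u,\cdot)$ does not pass through an unbounded operator. Writing $\kappa_u=(T_sT_s^*)^\rho\psi(u,\cdot)$ via the range condition, Lipschitz (or even mere) continuity of $u\mapsto\kappa_u$ would require the same of $u\mapsto\psi(u,\cdot)$, and Assumption~\ref{as:cdf}(iii) only gives $\sup_u\|\psi(u,\cdot)\|<\infty$ with no modulus in $u$. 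Consequently your ``Lipschitz net on a compact interval plus tail bound'' construction of brackets is not justified by the stated assumptions, and the polynomial bracketing claim is unsupported.

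The paper's route avoids this entirely by using regularity of $\kappa_u$ \emph{in the $w$-variable} rather than in the index $u$: from the range condition and \cite[Corollary~8.22]{engl1996regularization} one gets a uniform Sobolev bound $\sup_{u}\|\kappa_u\|_{r}<\infty$ for some $r>q/2$, so the whole correction class is contained in $\{(v,w)\mapsto vg(w):g\in H^r_M\}$. That larger class was already shown Donsker in the proof of Theorem~\ref{thm:residuals} via the Nickl--P\"otscher bracketing bound for Sobolev balls, and the result follows. In short, the right smoothing direction here is in $w$, not in $u$.
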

	\begin{proof}
		The process given in Theorem~\ref{thm:residuals} is an empirical process indexed by the following class of functions $\mathcal{F} = \left\{(v,w)\mapsto \one_{\{v\leq u\}} + v\left(T(T^*T)^{-1}f_{UZ}(u,.)\right)(w) ,\;u\in\R\right\}$, which is a sum of the Donsker class and $\mathcal{H} = \left\{(v,w)\mapsto v\left(T(T^*T)^{-1}f_{UZ}(u,.)\right)(w),\;u\in\R\right\}$. By \cite{van2000weak}, Example 2.10.5, it enough to show that $\mathcal{H}$ is Donsker. The former statement follows from the fact that under Assumption~\ref{as:hilbert_scale} in the Supplementary Material by \cite{engl1996regularization}, since for $\kappa-a > q/2$
		\begin{equation*}
			\sup_{u\in\R}\|T(T^*T)^{-1}f_{UZ}(u,.)\|_{\kappa-a} \lesssim \sup_{u\in\R}\|f_{UZ}(u,.)\|_{\kappa}\leq M<\infty,
		\end{equation*}
		where the last inequality follows under Assumption~\ref{as:smoothness} (i). Therefore, $\mathcal{H}\subset\{(v,w)\mapsto vg(w):\; g\in H^{\kappa-a}_M\}$, where $H_M^{\kappa-a}$ is a Sobolev ball of radius $M$. Since $\kappa>a+q/2$, this shows that the class $\mathcal{H}$ is Donsker; see \cite{nickl2007bracketing}, Corollaries 4 and 5. The covariance function simplifies since $\E[U|W]=0$.
	\end{proof}
	
	\section{Proofs of main results}\label{appendix:proofs}
	In this section we provide proofs of main results of the paper.
	\begin{proof}[Proof of Proposition~\ref{prop:separability}]
		Since $T:L_2(\R^p)\to L_2(\R^q)$ is injective, the nonparametric IV regression $\varphi\in L_2(\R^p)$ is unique. Therefore, $U=Y-\varphi(Z)$ is a well-defined unique random variable. If the model in equation (\ref{eq:nonseparable}) admits a separable representation, then since $\varepsilon\si W$
		\begin{equation*}
			\begin{aligned}
				\E[Y|W] & = \E[\psi(Z)+g(\varepsilon)|W] \\
				& = \E\left[\psi(Z)+\E g(\varepsilon)|W\right].
			\end{aligned}
		\end{equation*}
		Therefore, $\varphi(Z)=\psi(Z)+\E g(\varepsilon)$ by the injectivity of $T$, and whence $U=g(\varepsilon)-\E g(\varepsilon)$. This shows that $U\si W$ because $\varepsilon\si W$.
	\end{proof}
	
	\begin{proof}[Proof of Theorem~\ref{thm:main}]
		By Lemma~\ref{lemma:remainder2}, uniformly in $(u,w)$
		\begin{equation*}
			\mathbb{G}_n(u,w) = T_{1n}(u,w) + T_{2n}(u,w) - T_{3n}(u,w) + o_P(1),
		\end{equation*}
		where
		\begin{equation*}
			\begin{aligned}
				T_{1n}(u,w) & = \sqrt{n}\left(\hat F_{UW}(u,w) - \hat F_U(u)\hat F_W(w)\right), \\
				T_{2n}(u,w) & = \sqrt{n}\left(\Pr\left(U\leq u+\hat\Delta(Z), W\leq w|\mathscr{X}\right) - F_{UW}(u,w)\right), \\
				T_{3n}(u,w) & = \sqrt{n}\left(\Pr\left(U\leq u + \hat\Delta(Z)|\mathscr{X}\right) - F_U(u)\right)F_W(w).
			\end{aligned}
		\end{equation*}
		
		The first term is a classical independence empirical process
		{\small \begin{equation*}
				\begin{aligned}
					T_{1n}(u,w) & = \frac{1}{\sqrt{n}}\sum_{i=1}^n\left\{\one_{\{U_i\leq u,W_i\leq w\}} - \one_{\{U_i\leq u\}}F_W(w) - \one_{\{W_i\leq w\}}F_U(u) +F_U(u)F_W(w)\right\} \\
					& \qquad - \frac{1}{\sqrt{n}}\sum_{i=1}^n\left\{\one_{\{W_i\leq w\}} - F_W(w)\right\}\frac{1}{n}\sum_{i=1}^n\left\{\one_{\{U_i\leq u\}} - F_U(u)\right\} \\
					& = \frac{1}{\sqrt{n}}\sum_{i=1}^n\left\{\one_{\{U_i\leq u,W_i\leq w\}} - \one_{\{U_i\leq u\}}F_W(w) - \one_{\{W_i\leq w\}}F_U(u) +F_U(u)F_W(w)\right\} + o_P(1),
				\end{aligned}
		\end{equation*}}
		where the second line follows by the maximal inequality.
		
		Next, under Assumption~\ref{as:smoothness} (i), by Taylor's theorem, for some $\tau\in[0,1]$
		{\small \begin{equation*}
				\begin{aligned}
					T_{2n}(u,w) & = \sqrt{n} \iint^w\left\{\int_{-\infty}^{u+\hat\Delta(z)}f_{UZW}(\tilde u, z, \tilde w)\dx \tilde u - \int_{-\infty}^uf_{UZW}(\tilde u, z, \tilde w)\dx \tilde u\right\}\dx \tilde w\dx z \\
					& = \sqrt{n} \iint^w\left\{f_{UZW}(u,z,\tilde w)\hat\Delta(z) +  \frac{1}{2}\partial_u f_{UZW}(u+\tau\hat\Delta(z), z,\tilde w) \hat\Delta^2(z)\right\}\dx \tilde w\dx z \\
					& = \sqrt{n}\left\langle\hat\varphi - \varphi, \int^wf_{UZW}(u,.,\tilde w)\dx \tilde w\right\rangle + \frac{\sqrt{n}}{2}\iint^w\partial_uf_{UZW}(u+\tau\hat\Delta(z),z,\tilde w)\dx \tilde w\hat\Delta^2(z)\dx z \\
					& \triangleq S_{1n}(u,w) + S_{2n}(u,w).
				\end{aligned}
		\end{equation*}}
		Under Assumptions~\ref{as:smoothness} by Corollary~\ref{cor:rates_npiv}
		\begin{equation*}
			\begin{aligned}
				\|S_{2n}\|_\infty & \leq \sup_{w,u,z}\left|\int^w\partial_uf_{UZW}(u,z,\tilde w)\dx \tilde w\right|\sqrt{n}\left\|\hat{\varphi} - \varphi\right\|^2 = o_P(1)
			\end{aligned}
		\end{equation*}
		Similarly, we have uniformly in $(u,w)$
		\begin{equation*}
			\begin{aligned}
				T_{3n}(u,w) & = \sqrt{n}\langle\hat\varphi - \varphi,f_{UZ}(u,.)\rangle F_W(w) + o_P(1).
			\end{aligned}
		\end{equation*}
		Therefore, uniformly in $(u,w)\in\R\times\R^q$
		\begin{equation*}
			\begin{aligned}
				& T_{2n}(u,w)-T_{3n}(u,w) \\
				& = \sqrt{n}\int (\hat\varphi(z) - \varphi(z))\left\{\int^wf_{UZW}(u,z,\tilde w)\dx \tilde w - f_{UZ}(u,z)F_W(w)\right\}\dx z + o_P(1) \\
				& = \sqrt{n}\int(\hat{\varphi}(z) - \varphi(z))\rho(u,z,w)\dx z + o_P(1) \\
				& = \frac{1}{\sqrt{n}}\sum_{i=1}^nU_i\left(T(T^*T)^{-1}\rho(u,.,w)\right)(W_i) + o_P(1),
			\end{aligned}
		\end{equation*}
		where the last line follows by the same argument as in the proof of Theorem~\ref{thm:residuals} under Assumption~\ref{as:smoothness} (i).
	\end{proof}
	
	\begin{proof}[Proof of Proposition~\ref{corr:main}]
		$\mathbb{H}_n$ is an empirical process indexed by the class of functions
		\footnotesize{\begin{equation*}
				\mathcal{F} = \left\{(v,w)\mapsto \one_{\{v\leq \tilde v,w\leq \tilde w\}} - \one_{\{v\leq \tilde v\}}F_W(\tilde w) - \one_{\{w\leq \tilde w\}}F_U(\tilde v) + F_{UW}(\tilde v,\tilde w) + \delta_{\tilde v,\tilde w}(v,w):\;(\tilde v,\tilde w)\in\R^{1+q}\right\}.
		\end{equation*}} \normalsize
		By \cite{van2000weak}, Example 2.10.7 it suffices to show that each of the functions in the sum constitutes a Donsker class. To that end, recall first that the indicator functions are classical examples of Donsker classes. Therefore, all terms in $\mathcal{F}$, but the last one, are either Donsker or can be factored as Donsker classes and a deterministic bounded function not depending on the argument of the indicator function. Lastly, under Assumptions~\ref{as:hilbert_scale} (i) by \cite{engl1996regularization}, Corollary 8.22
		\begin{equation*}
			\|T(T^*T)^{-1}g(v,w,.)\|_{\kappa-a} \lesssim \sup_{(v,w)\in\R^{1+q}}\|g(v,w,.)\|_\kappa \leq M<\infty,
		\end{equation*}
		where the latter follows under Assumption~\ref{as:smoothness} (ii). Therefore, we obtain that $\{(v,w)\mapsto v(T(T^*T)^{-1}g(\tilde v,\tilde w,.))(w):\;\tilde v\in\R, \tilde w\in\R^q \}\subset \{(v,w)\mapsto vg(w):\; g\in H^{\kappa-a}_M\}$, where $H_M^{\kappa-a}$ is a Sobolev ball of radius $M$. Since $\kappa>a+q/2$, this shows that $\mathcal{F}$ is Donsker; see \cite{nickl2007bracketing}, Corollaries 4 and 5.
	\end{proof}
	
	\begin{proof}[Proof of Corollary~\ref{cor:asympt_distributions}]
		Since under $H_0$, $\mathbb{G}_n\leadsto \mathbb{H}$ by Proposition~\ref{corr:main}, the asymptotic distribution of $T_{\infty,n}$ under $H_0$ is readily obtained by the continuous mapping theorem; see \cite{van2000weak}, Theorem 1.3.6. For the Cram\'{e}r-von Mises statistics, write
		\begin{equation*}
			T_{2,n} = \iint \mathbb{H}^2(u,w)\dx F_{UW}(u,w) + R_{1n} + R_{2n}
		\end{equation*}
		with
		\begin{equation*}
			\begin{aligned}
				R_{1n} & = \iint\left\{\mathbb{G}_n^2(u,w) - \mathbb{H}^2(u,w)\right\}\dx \hat F_{\hat UW}(u,w) \\
				R_{2n} & = \iint\mathbb{H}^2(u,w)\dx[\hat F_{\hat UW}(u,w) - F_{UW}(u,w)].
			\end{aligned}
		\end{equation*}
		By Proposition~\ref{corr:main}, under $H_0$, $\mathbb{G}_n\leadsto \mathbb{H}$ and $\sqrt{n}(\hat F_{\hat UW}(u,w) - F_{UW}(u,w))$ also converges weakly by Proposition~\ref{corr:main} and Theorem~\ref{corr:clt}, whence by the Skorokhod construction
		\begin{equation}\label{eq:skorokhod}
			\begin{aligned}
				n^{-1/2}\sup_{u,w}\left|\mathbb{G}_n(u,w)\right|\xrightarrow{\mathrm{a.s.}}0\qquad \text{and}\qquad \sup_{u,w}\left|\hat F_{\hat UW}(u,w) - F_{UW}(u,w)\right|\xrightarrow{\mathrm{a.s.}}0.
			\end{aligned}
		\end{equation}
		The first expression in Eq.~\ref{eq:skorokhod} implies that $R_{1n}\xrightarrow{\mathrm{a.s.}}0$. Since $\mathbb{H}$ has a.s. bounded and continuous trajectories, the second expression in Eq.~\ref{eq:skorokhod} in conjunction with the Helly-Bray theorem show that $R_{2n}\xrightarrow{\mathrm{a.s.}}0$. Therefore, the asymptotic distribution of the Cram\'{e}r-von Mises test follows by the continuous mapping theorem.
		
		Under the fixed alternative hypothesis, since $\E[U|W]=0$, by Theorem~\ref{thm:main}, the Glivenko-Cantelli theorem, and a similar argument we obtain
		\begin{equation*}
			\begin{aligned}
				n^{-1/2}T_{2,n} & = \iint|n^{-1/2}\mathbb{G}_n(u,w)|^2\dx \hat F_{\hat UW}(u,w) \xrightarrow{\mathrm{a.s.}} 2d_2>0 \\
				n^{-1/2}T_{\infty,n} & = \sup_{u,w}|n^{-1/2}\mathbb{G}_n(u,w)| \xrightarrow{\mathrm{a.s.}} 2d_\infty>0.
			\end{aligned}
		\end{equation*}
		Therefore, by Slutsky's theorem $T_{2,n} \xrightarrow{\mathrm{a.s.}} \infty$ and $T_{\infty,n} \xrightarrow{\mathrm{a.s.}} \infty$, which proves the second statement. For the local alternatives, note that
		\begin{equation*}
			\begin{aligned}
				\E[h_{u,w}(U,W)] & = 2\left(F_{UW}(u,w) - F_U(u)F_W(w)\right) = 2n^{-1/2}H(u,w).
			\end{aligned}
		\end{equation*}
		Therefore, by Corollary~\ref{corr:main} and continuous mapping theorem
		\begin{equation*}
			\begin{aligned}
				T_{\infty,n} & = \sup_{u,w}|\mathbb{G}_n(u,w)| \\
				& = \sup_{u,w}|\mathbb{G}_n(u,w) - \sqrt{n}\E[h_{u,w}(U,W)] + 2H(u,w)| \\
				& \leadsto \sup_{u,w}|\mathbb{H}(u,w)  + 2H(u,w)|.
			\end{aligned}
		\end{equation*}
		For the Cram\'{e}r-von Mises statistics, write
		\begin{equation*}
			\begin{aligned}
				T_{2,n} & = \iint|\mathbb{H}(u,w) + 2H(u,w)|^2\dx F_{UW}(u,w) + S_{1n} + S_{2n},
			\end{aligned}
		\end{equation*}
		where
		\begin{equation*}
			\begin{aligned}
				S_{1n} & = \iint \left\{\left|\mathbb{G}_n(u,w) - \sqrt{n}\E[h_{u,w}(U,W)] + 2H(u,w)\right|^2 - |\mathbb{H}(u,w) + 2H(u,w)|^2 \right\}\dx \hat F_{\hat UW}(u,w) \\
				S_{2n} & = \iint|\mathbb{H}(u,w) + 2H(u,w)|^2\dx\left[\hat F_{\hat UW}(u,w) - F_{UW}(u,w)\right].
			\end{aligned}
		\end{equation*}
		Therefore, the result follows by Proposition~\ref{corr:main} and the same argument as under $H_0$ with the only difference that now we have the bias $2H$ in the limiting distribution.
	\end{proof}
	
	\section{Auxiliary technical results}
	In this section, we provide several auxiliary technical results.
	\begin{lemma}\label{lemma:remainder}
		Suppose that Assumption~\ref{as:hilbert_scale}, \ref{as:dgp}, \ref{as:smoothness}, and \ref{as:tuning}. Then
		\begin{equation}\label{eq:remainder}
			\sup_u\left|\hat F_{\hat U}(u)  - \hat F_{U}(u) - \Pr(U\leq u + \hat\Delta(Z)|\mathscr{X}) + F_{U}(u)\right| = o_P\left(n^{-1\slash 2}\right),
		\end{equation}
		where $\hat\Delta = \hat{\varphi} - \varphi$ and $\mathscr{X}=(Y_i,Z_i,W_i)_{i=1}^\infty$.
	\end{lemma}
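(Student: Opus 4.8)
The plan is to decompose the difference $\hat F_{\hat U}(u) - \hat F_U(u)$ into a term that concentrates around its conditional expectation given $\mathscr{X}$ and a negligible remainder. Writing $\one_{\{\hat U_i \leq u\}} = \one_{\{U_i \leq u + \hat\Delta(Z_i)\}}$, set
\begin{equation*}
R_n(u) = \frac{1}{\sqrt n}\sum_{i=1}^n\Big\{\one_{\{U_i \leq u + \hat\Delta(Z_i)\}} - \one_{\{U_i\leq u\}} - \Pr\big(U\leq u + \hat\Delta(Z)\,|\,\mathscr{X}\big) + F_U(u)\Big\},
\end{equation*}
which is exactly $\sqrt n$ times the left-hand side of \eqref{eq:remainder}. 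First I would condition on $\mathscr{X}$ and observe that $R_n(u)$ is a centered empirical process indexed by $u$, but with a \emph{random} (through $\hat\Delta$) index class. The standard device here, following \cite{akritas2001non} and the residual-empirical-process literature, is to replace the data-dependent perturbation $\hat\Delta$ by a deterministic shrinking neighborhood: on the event $\mathcal{E}_n = \{\|\hat\Delta\|_\infty \leq \eta_n\}$ with $\eta_n \to 0$ a suitable deterministic sequence, $R_n(u)$ is bounded in absolute value by the supremum of the ordinary empirical process over the class
\begin{equation*}
\mathcal{G}_n = \big\{ (y,z) \mapsto \one_{\{y - \varphi(z) \leq u + \delta(z)\}} - \one_{\{y - \varphi(z)\leq u\}} : u\in\R,\ \|\delta\|_\infty \leq \eta_n \big\}.
\end{equation*}
Then the problem reduces to showing $\|\Gbb_n\|_{\mathcal{G}_n} = o_p(1)$ together with $\Pr(\mathcal{E}_n) \to 1$.

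The event $\mathcal{E}_n$ requires $\|\hat\varphi - \varphi\|_\infty \to 0$ in probability at rate $\eta_n$. Here I would invoke Proposition~\ref{prop:rates_npiv} / Theorem~\ref{thm:risk_hs}: under Assumption~\ref{as:tuning} we have $\E\|\hat\varphi_{\alpha_n} - \varphi\|_s^2 \to 0$ for $s > p/2$, and by the Sobolev embedding $H^s(\R^p) \hookrightarrow C_b(\R^p)$ (valid precisely because $s>p/2$, which is why that condition is imposed) this upgrades to sup-norm consistency; tracking the rate through the bound in Proposition~\ref{prop:rates_npiv} yields an explicit $\eta_n$. For the empirical-process part, $\mathcal{G}_n$ is a VC-type class (differences of indicators of half-spaces in $(y,z)$ shifted by a uniformly bounded function), so its covering numbers are polynomial and uniformly controlled; the key quantitative input is that the envelope's $L_2(\pr)$-norm is $o(1)$, which follows from Assumption~\ref{as:cdf}(i): $|F_U(u+\delta(z)) - F_U(u)|$ and the relevant conditional-probability increments are $O(\eta_n)$ uniformly because the densities $f_{UZ}$, $f_U$ have bounded derivatives in the first argument. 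A maximal inequality for VC classes (e.g. the standard $\sqrt n$-rate bound with the small-envelope refinement) then gives $\|\Gbb_n\|_{\mathcal{G}_n} = o_p(1)$, using that $\sqrt n \cdot$ (envelope $L_2$-norm) $\cdot \sqrt{\log n} \to$ does not blow up — more carefully, one combines $\sqrt{\eta_n}$ from the envelope with the $\sqrt{\log n}$ from the entropy, and the condition $h_n/\alpha_n \to 0$ in Assumption~\ref{as:tuning} is what makes $\eta_n \log n \to 0$.

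There is a subtlety because $\hat\Delta$ is \emph{not} independent of the $U_i$ (it is built from the same sample), so conditioning on $\mathscr{X}$ fixes the perturbation but one must be careful that the empirical process over $\mathcal{G}_n$ is still evaluated at the same realizations. The clean way around this is the monotonicity/sandwiching argument: for fixed $u$ and on $\mathcal{E}_n$,
\begin{equation*}
\one_{\{U_i \leq u - \eta_n\}} \leq \one_{\{U_i \leq u + \hat\Delta(Z_i)\}} \leq \one_{\{U_i \leq u + \eta_n\}},
\end{equation*}
so $\hat F_{\hat U}(u)$ is sandwiched between the ordinary (shifted) empirical CDFs $\hat F_U(u\pm\eta_n)$, whose fluctuations around $F_U(u\pm\eta_n)$ are controlled by the classical Donsker theorem for $F_U$ and whose bias $F_U(u\pm\eta_n) - F_U(u) = O(\eta_n)$ is $o(n^{-1/2})$ once $\sqrt n\,\eta_n \to 0$ — and indeed the same $\sqrt n\,\eta_n \to 0$ is needed for the matching bound on $\Pr(U \leq u + \hat\Delta(Z)\,|\,\mathscr{X}) - F_U(u)$. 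I expect the main obstacle to be verifying that the tuning-parameter conditions in Assumption~\ref{as:tuning} actually deliver $\sqrt n\,\eta_n \sqrt{\log n} \to 0$ with $\eta_n$ the sup-norm rate extracted from Proposition~\ref{prop:rates_npiv}; this is a bookkeeping exercise balancing $\alpha_n^{(b-s)/(a+s)}$, $1/(n\alpha_n^2 h_n^{p+q})$ and $h_n^{2b}/\alpha_n^2$ against the Sobolev-embedding constant, and it is precisely where the strengthened smoothness hypotheses ($b>2$, $b > 2a \geq t > (p+q)/2$, $s > p/2$) earn their keep.
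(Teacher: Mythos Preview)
Your proposal has two connected gaps. First, the class $\mathcal{G}_n$ indexed by \emph{all} measurable $\delta$ with $\|\delta\|_\infty\leq\eta_n$ is not VC-type: taking points $(0,z_1),\dots,(0,z_m)$ with the $z_j$ distinct, any dichotomy is realized by $u=0$ and $\delta(z_j)=\pm\eta_n/2$ according to the desired sign pattern, so the class shatters sets of arbitrary size and admits no useful uniform-entropy or bracketing bound. Second, and more decisively, the sandwiching argument you fall back on requires $\sqrt n\,\eta_n\to 0$, which is impossible here: the $\|\cdot\|_s$-risk in Proposition~\ref{prop:rates_npiv} contains the variance term $(n\alpha_n^2)^{-1}$, so via the embedding $\|\cdot\|_\infty\lesssim\|\cdot\|_s$ the best sup-norm rate satisfies $\sqrt n\,\eta_n\gtrsim \alpha_n^{-1}\to\infty$. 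In an ill-posed problem the estimator is too noisy for a crude sup-norm sandwich, and no bookkeeping with Assumption~\ref{as:tuning} will make $\sqrt n\,\eta_n\sqrt{\log n}\to 0$.

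The paper's proof avoids both issues by indexing the perturbation over a \emph{fixed}-radius Sobolev ball $H_M^s$ rather than a shrinking sup-norm ball. The Nickl--P\"otscher bracketing bound for $H_M^s$ (which needs $s>p/2$) is lifted to brackets for $\Gc_1=\{\one_{(-\infty,u+\Delta]}:(u,\Delta)\in\R\times H_M^s\}$, giving a finite bracketing-entropy integral and hence a Donsker class $\Gc=\Gc_1-\Gc_2$. After that, only two weak facts are required: $\Pr(\hat\Delta\in H_M^s)\to 1$, which follows from $\|\hat\Delta\|_s=o_p(1)$ and Markov; and $\rho_\pr(\hat f)=o_p(1)$, which follows from mere $L_2$-consistency of $\hat\varphi$. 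Asymptotic equicontinuity of the empirical process over $\Gc$ then gives the $o_p(n^{-1/2})$ remainder without ever asking for $\sqrt n\,\|\hat\Delta\|_\infty\to 0$. In short, $s>p/2$ is used to control the \emph{entropy} of $H_M^s$, not to feed a sup-norm embedding into a sandwich; you invoke the Sobolev structure only to extract a sup-norm rate and then discard it, whereas the argument needs it precisely at the entropy step.
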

	\begin{proof}
		The main idea of the proof is to embed the process inside the supremum into an empirical process indexed by $u$ and a Sobolev ball containing $\hat \Delta$ with a probability tending to one. We first show that the process is Donsker, whence the supremum in Eq.~\ref{eq:remainder} is $O_P(n^{-1/2})$. Finally, the required $o_P(n^{-1/2})$ order will follow from the fact that the process is degenerate.
		
		Let $H_M^c$ be a ball of radius $M<\infty$ in the Sobolev space $H^c(\R^p)$. For $u\in\R$ and $\Delta\in H^c_M$, define $f_{u,\Delta}(U,Z)=\one_{(-\infty,u+\Delta(Z)]}(U)$, $\mathcal{G}_1 = \left\{f_{u,\Delta}:\; u\in\R,\Delta\in H^c_M(\R^p) \right\}$, $\mathcal{G}_2 =  \left\{f_{u,0}:\; u\in\R\right\}$, and $\mathcal{G} = \mathcal{G}_1 - \mathcal{G}_2$.
		Note that $\mathcal{G}_2$ is a classical Donsker class of indicator functions. If we can show that $\mathcal{G}_1$ is Donsker, then $\mathcal{G}$ will be Donsker as a sum of two Donsker classes; see \cite{van2000weak}, Theorem 2.10.6. To this end, we check that the bracketing entropy condition is satisfied for $\mathcal{G}_1$. 
		
		By  \cite{nickl2007bracketing}, Corollary 4 the bracketing number of $H_M^c$ satisfies $\log N_{[\;]}(\varepsilon, H_M^c,\|.\|_{L^2_Z}) \lesssim \varepsilon^{-p/c}$, where $(L^2_Z,\|.\|_{L^2_Z})$ denotes the space of functions, square-integrable with respect to $f_Z$. Put $M_\varepsilon = N_{[\;]}(\varepsilon, H_M^c,\|.\|_{L^2_Z})$ and fix $u\in\R$. Let $\left[\underline\Delta_j,\overline\Delta_j\right]_{j=1}^{M_\varepsilon}$ be a collection of  $\varepsilon$-brackets for $H^c_M$, i.e., for any $\Delta\in H^c_M$, there exists $1\leq j\leq M_\varepsilon$ such that $\underline\Delta_j\leq \Delta\leq \overline{\Delta}_j$ and $\left\|\overline\Delta_j-\underline\Delta_j\right\|_{L_Z^2}\leq\varepsilon$, and whence $\one_{\left(-\infty, u + \underline\Delta_j\right]} \leq \one_{\left(-\infty, u + \Delta\right]} \leq \one_{\left(-\infty, u + \overline\Delta_j\right]}$. Now for each $1\leq j\leq M_\varepsilon$, partition the real line into intervals defined by grids of points $-\infty=\underline u_{j,1}<\underline u_{j,2}<\dots<\underline u_{j,M_{1\varepsilon}}=\infty$ and $-\infty=\overline u_{j,1}<\overline u_{j,2}<\dots<\overline u_{j,M_{2\varepsilon}}=\infty$, so that each segment has probabilities
		\begin{equation*}
			\begin{aligned}
				\Pr\left(U-\underline\Delta_j(Z)\leq \underline u_{j,k}\right) - \Pr\left(U-\underline \Delta_{j}(Z)\leq \underline u_{j,k-1}\right) & \leq \varepsilon^2/2,\qquad 2\leq k\leq \frac{2}{\varepsilon^2}\triangleq M_{1\varepsilon}, \\
				\Pr\left(U-\overline\Delta_j(Z)\leq \overline u_{j,k}\right) - \Pr\left(U-\overline \Delta_{j}(Z)\leq \overline u_{j,k-1}\right) & \leq \varepsilon^2/2, \qquad 2\leq k\leq \frac{2}{\varepsilon^2} \triangleq M_{2\varepsilon}.\\
			\end{aligned}
		\end{equation*}
		Denote the largest $\underline u_{j,k}$ such that $\underline u_{j,k}\leq u$ by $\underline{u_{j}^*}$ and the smallest $\overline u_{j,k}$ such that $u\leq \overline u_{jk}$ by $\overline u_{j}^*$. Consider the following family of brackets $\left[\one_{\left(-\infty, \underline u_{j}^*+\underline\Delta_j\right]},\one_{\left(-\infty, \overline u_{j}^*+\overline\Delta_j\right]}\right]_{j=1}^{M_\varepsilon}.$ Under Assumption~\ref{as:dgp} (ii)
		\begin{equation*}
			\begin{aligned}
				\left\|\one_{\left(-\infty, \overline u_{j}^*+\overline\Delta_j\right]} - \one_{\left(-\infty, \underline u_{j}^*+\underline\Delta_j\right]}\right\|_{L_Z^2}^2 & = \Pr\left(\underline u_{j}^*+\underline\Delta_j(Z)\leq U\leq \overline u_j^* + \overline\Delta_j(Z)\right) \\
				& \leq \Pr\left(u+\underline\Delta_j(Z) \leq U\leq u + \overline\Delta_j(Z)\right)  + \varepsilon^2 \\
				& = \int\left\{ \int_{u+\underline\Delta_j(z)}^{u+\overline\Delta_j(z)}f_{U|Z}(u|z)\dx u\right\}f_Z(z)\dx z + \varepsilon^2 \\
				& \leq \left\|\overline\Delta_j - \underline\Delta_j\right\|_{L^2_Z}\|f_{U|Z}\|_\infty + \varepsilon^2= O\left(\varepsilon^2\right).
			\end{aligned}
		\end{equation*}
		Therefore, we constructed brackets of size $O(\varepsilon)$, covering $\mathcal{G}_1$, and we have used at most $O\left(\varepsilon^{-2}M_\varepsilon\right)$ such brackets. Since $c>p/2$, we have $\int_0^1 \sqrt{\log N_{[\;]}(\varepsilon,\mathcal{G},\|.\|_{L_Z^2})}\dx \varepsilon < \infty$. This shows that the empirical process $\sqrt{n}(P_n - P)g,g\in\mathcal{G}$ is Donsker, hence, asymptotically equicontinuous; see \cite{van2000weak}, Theorem 1.5.7. Then for any $\varepsilon>0$
		\begin{equation}\label{eq:equicontinuity}
			\lim_{\delta\downarrow 0}\limsup_{n\to\infty}\mathrm{Pr}^*\left(\sup_{f,g\in\mathcal{G}:\;\rho(f-g)<\delta}|\sqrt{n}(P_n - P)(f-g)|>\varepsilon\right) = 0,
		\end{equation}
		where $\mathrm{Pr}^*$ denotes the outer probability measure. 
		
		Next, we show that for every $u\in\R$, $\rho^2(\hat f_u) = \E[\hat f_u^2] - (\E[\hat f_u])^2 = o_P(1)$	with  $\hat f_u = \one_{(-\infty,u+\hat\Delta(Z)]}(U) - \one_{(-\infty,u]}(U)$, where the expectation is computed with respect to $(U,Z)$ only. Indeed,
		\begin{equation*}
			\begin{aligned}
				\E[\hat f_u] & = \Pr(u\leq U\leq u+\hat \Delta(Z)|\mathscr{X}) \\
				& = \int \int_u^{u+\hat \Delta(z)}f_{U|Z}(v|z)\dx vf_Z(z)\dx z\\
				& \leq\|f_{U|Z}\|_\infty\|f_Z\| \|\hat \Delta\|  = o_P(1),
			\end{aligned}
		\end{equation*}	
		where the third line follows by the Cauchy-Schwartz inequality and Corollary~\ref{cor:rates_npiv} under Assumptions~\ref{as:hilbert_scale}, \ref{as:dgp}, and \ref{as:tuning}. Similarly,
		\begin{equation*}
			\begin{aligned}
				\E[\hat f_u^2] & = \Pr(U\leq u+\hat \Delta(Z)|\mathscr{X}) + \Pr(U\leq u) - 2\Pr(U\leq (u+\hat \Delta(Z))\wedge u|\mathscr{X}) \\
				& \leq \iint_{u}^{u+\hat \Delta(z)}f_{U|Z}(v|z)\dx vf_Z(z)\dx z \lesssim \|\hat\Delta\| = o_P(1).\\
			\end{aligned}
		\end{equation*}
		
		Lastly, let $\|\hat \nu_n\|_\infty$ denote the supremum in Eq~\ref{eq:remainder}. Then
		\begin{equation*}
			\begin{aligned}
				\mathrm{Pr}^*(\sqrt{n}\|\hat \nu_n\|_\infty > \varepsilon) & \leq \mathrm{Pr}^*\left(\sqrt{n}\|\hat \nu_n\|_\infty>\varepsilon,\rho(\hat f_u)< \delta,\hat \Delta\in H_M^c\right) + \mathrm{Pr}^*\left(\rho(\hat f_u)\geq \delta\right) \\
				& \qquad + \mathrm{Pr}^*\left(\hat \Delta\not\in H^c_M\right),
			\end{aligned}
		\end{equation*}
		where the second probability tends to zero as we have just shown and the last probability tends to zero since under the maintained assumptions, by Corollary~\ref{cor:rates_npiv}, $\|\hat\varphi - \varphi\|_c = o_P(1)$. Therefore, it follows from the asymptotic equicontinuity in Eq.~\ref{eq:equicontinuity} that $\limsup_{n\to\infty}\mathrm{Pr}^*(\sqrt{n}\|\hat \nu_n\|_\infty > \varepsilon)=0$, which concludes the proof.
	\end{proof}

	\begin{lemma}\label{lemma:remainder2}
		Suppose that Assumptions~\ref{as:hilbert_scale}, \ref{as:dgp}, \ref{as:smoothness}, and \ref{as:tuning} are satisfied. Then uniformly over $(u,w)\in\R\times\R^q$
		\begin{equation*}
			(\hat F_{\hat U}(u) - \hat F_U(u))\hat F_W(w) - \left(\Pr(U \leq u + \hat\Delta(Z)|\mathscr{X}) + F_U(u)\right)F_W(w) = o_P\left(n^{-1\slash 2}\right)
		\end{equation*}
		and	
		\begin{equation*}
			\hat F_{\hat UW}(u,w)  - \hat F_{UW}(u,w) - \Pr(U\leq u+\hat\Delta(Z),W\leq w|\mathscr{X}) + F_{UW}(u,w) = o_P\left(n^{-1\slash 2}\right).
		\end{equation*}	
		where $\hat\Delta = \hat\varphi - \varphi$ and $\mathscr{X}=(Y_i,Z_i,W_i)_{i=1}^\infty$,
	\end{lemma}
	\begin{proof}
		Note that the first expression and the expression in the statement of Lemma~\ref{lemma:remainder} multiplied by $F_W$ differ only by
		\begin{equation*}
			(\hat F_{\hat U}(u) - F(u))(\hat F_W(w) - F_W(w)),
		\end{equation*}
		which is $O_P(n^{-1})$ by Corollary~\ref{corr:clt} and the classical Donsker central limit theorem. By Lemma~\ref{lemma:remainder}, we obtain the first statement since $F_W$ is uniformly bounded by one.
		
		The proof of the second statement is similar to the proof of Lemma~\ref{lemma:remainder} and is omitted.
	\end{proof}

\newpage
\spacingset{1.45} 
\setcounter{page}{1}
\setcounter{section}{0}
\setcounter{equation}{0}
\setcounter{table}{0}
\setcounter{figure}{0}
\renewcommand{\theequation}{B.\arabic{equation}}
\renewcommand\thetable{B.\arabic{table}}
\renewcommand\thefigure{B.\arabic{figure}}
\renewcommand\thesection{B.\arabic{section}}
\renewcommand\thepage{Online Appendix - \arabic{page}}
\renewcommand\thetheorem{B.\arabic{theorem}}

\section{Additional proofs and auxiliary results}
This section contains proofs of several results from the main part of the paper as well as several auxiliary result.
\begin{proof}[Proof of Theorem A.1]
	Decompose
	\begin{equation*}
		\hat{\varphi} - \varphi = I_n + II_n + III_n + IV_n + V_n,
	\end{equation*}
	with
	\begin{equation*}
		\begin{aligned}
			I_n & = L^{-s}(\alpha_n I + T_s^*T_s)^{-1}T_s^*(\hat r - \hat T\varphi), \\
			II_n & = L^{-s}(\alpha_n I + T_s^*T_s)^{-1}(\hat T_s^* - T_s^*)(\hat r - \hat T\varphi), \\
			III_n & = L^{-s}\left[(\alpha_n I + \hat T_s^*\hat T_s)^{-1} - (\alpha_n I + T_s^*T_s)^{-1}\right]\hat T_s^*(\hat r - \hat T\varphi), \\ 
			IV_n & = L^{-s}(\alpha_nI + \hat T^*_s\hat T_s)^{-1}\hat T^*_s\hat T_sL^s\varphi - L^{-s}(\alpha_n I + T^*_sT_s)^{-1}T^*_sT_sL^s\varphi, \\
			V_n & = L^{-s}(\alpha_n I + T^*_sT_s)^{-1}T_s^*T_sL^s\varphi - \varphi.
		\end{aligned}
	\end{equation*}
	For the first term
	\begin{equation*}
		\begin{aligned}
			\|I_n\|_c^2 & = \left\|(\alpha_n I + T^*_sT_s)^{-1}T^*_s(\hat r - \hat T\varphi)\right\|^2_{c-s} \\
			& \lesssim \left\|(T^*_sT_s)^{\frac{s-c}{2(a+s)}}(\alpha_n I + T^*_sT_s)^{-1}T^*_s(\hat r - \hat T\varphi)\right\|^2 \\
			& \leq \left\|(T^*_sT_s)^{\frac{s-c}{2(a+s)}}(\alpha_n I + T^*_sT_s)^{-1}T^*_s\right\|^2_{\mathrm{op}}\left\|\hat r - \hat T\varphi\right\|^2 \\
			& \leq \sup_\lambda\left|\frac{\lambda^\frac{2s+a-c}{2(a+s)}}{\alpha_n + \lambda}\right|^2 \left\|(\hat r - \hat T\varphi)\right\|^2 \\
			& \lesssim \alpha_n^{-\frac{a+c}{a+s}}\left\|\hat r - \hat T\varphi\right\|^2,
		\end{aligned}
	\end{equation*}
	where the second line follows by \cite{engl1996regularization}, Corollary 8.22 with $\nu=(s-c)/(a+s)\leq 1$; the third line by the definition of operator norm; the fourth line by the isometry of functional calculus; and the last since $\sup_\lambda|\lambda^d/(\alpha_n+\lambda)| \lesssim \alpha_n^{d-1}$ for all $d\in[0,1]$.
	
	Similarly, since for bounded linear operators $A$ and $B$, $\|AB\|_{\mathrm{op}}\leq \|A\|_{\mathrm{op}}\|B\|_{\mathrm{op}}$,
	\begin{equation*}
		\begin{aligned}
			\|II_n\|_c^2 & = \left\|(\alpha_n I + T_s^*T_s)^{-1}(\hat T_s^* - T_s^*)(\hat r - \hat T\varphi)\right\|^2_{c-s} \\
			& \lesssim \left\|(T^*_sT_s)^{\frac{s-c}{2(a+s)}}(\alpha_n I + T^*_sT_s)^{-1}\right\|_{\mathrm{op}}^2\|\hat T^* - T^*\|^2_{\mathrm{op}}\left\|\hat r - \hat T\varphi\right\|^2 \\
			& \lesssim_P \alpha_n^{-\frac{2a+s+c}{a+s}}\alpha_n\left\|\hat r - \hat T\varphi\right\|^2 \\
			& \lesssim \alpha_n^{-\frac{a+c}{a+s}}\left\|\hat r - \hat T\varphi\right\|^2.
		\end{aligned}
	\end{equation*}
	
	Next, since $L^s\varphi\in H^{b-s}$ and $s\geq (b-a)/2$, by \cite{engl1996regularization}, Corollary 8.22, there exists $\psi\in L_2$ such that $L^s\varphi = (T_s^*T_s)^{\frac{b-s}{2(a+s)}}\psi$. Therefore,
	\begin{equation*}
		\begin{aligned}
			\|V_n\|_c^2 & = \left\|(\alpha_nI+ T^*_sT_s)^{-1}T^*_sT_sL^s\varphi - L^s\varphi\right\|_{c-s} \\
			& = \left\|\alpha_n(\alpha_n I + T^*_sT_s)^{-1}L^s\varphi\right\|^2_{c-s} \\
			& \lesssim \left\|\alpha_n(T_s^*T_s)^\frac{s-c}{2(a+s)}(\alpha_n I + T^*_sT_s)^{-1}(T_s^*T_s)^\frac{b-s}{2(a+s)}\psi\right\|^2\\
			& \lesssim \left\|\alpha_n(T_s^*T_s)^\frac{s-c}{2(a+s)}(\alpha_n I + T^*_sT_s)^{-1}(T_s^*T_s)^\frac{b-s}{2(a+s)}\right\|^2_{\mathrm{op}} \\
			& \leq \sup_{\lambda}\left|\frac{\alpha_n\lambda^{\frac{b-c}{2(a+s)}}}{\alpha_n+\lambda}\right|^2 \lesssim \alpha_n^{\frac{b-c}{a+s}}.
		\end{aligned}
	\end{equation*}

	Next, decompose
	\begin{equation*}
		\begin{aligned}
			\|III_n\|_c^2 & = \left\|\left[(\alpha_n I + T_s^*T_s)^{-1} - (\alpha_n I + \hat T_s^*\hat T_s)^{-1}\right]\hat T_s^*(\hat r - \hat T\varphi)\right\|^2_{c-s} \\
			& = \left\|(\alpha_n I + T_s^*T_s)^{-1}(\hat T_s^*\hat T_s - T_s^*T_s)(\alpha_n I + \hat T_s^*\hat T_s)^{-1}\hat T_s^*(\hat r - \hat T\varphi)\right\|^2_{c-s} \\
			& \leq 2R_{1n} + 2R_{2n}
		\end{aligned}
	\end{equation*}
	with
	\begin{equation*}
		\begin{aligned}
			R_{1n} & = \left\|(\alpha_n I + T_s^*T_s)^{-1}T_s^*(\hat T_s - T_s)(\alpha_n I + \hat T_s^*\hat T_s)^{-1}\hat T_s^*(\hat r - \hat T\varphi) \right\|^2_{s-c} \\
			& \lesssim \left\|(T^*_sT_s)^{\frac{s-c}{2(a+s)}}(\alpha_n I + T_s^*T_s)^{-1}T_s^*\right\|^2_{\mathrm{op}}\|\hat T_s - T_s\|^2_{\mathrm{op}}\|(\alpha_n I + \hat T_s^*\hat T_s)^{-1}\hat T_s^*\|_{\mathrm{op}}^2\left\|\hat r - \hat T\varphi\right\|^2 \\
			& \leq \left\|(T^*_sT_s)^{\frac{s-c}{2(a+s)}}(\alpha_n I + T_s^*T_s)^{-1}T_s^*\right\|^2_{\mathrm{op}}\alpha_n\frac{1}{\alpha_n} \left\|\hat r - \hat T\varphi\right\|^2\\
			& \lesssim_P \alpha_n^{-\frac{a+c}{a+s}}\left\|\hat r - \hat T\varphi \right\|^2 \\
		\end{aligned}
	\end{equation*}
	and
	\begin{equation*}
		\begin{aligned}
			R_{2n} & = \left\|(T^*_sT_s)^{\frac{s-c}{2(a+s)}}(\alpha_n I + T_s^*T_s)^{-1}(\hat T_s^* - T_s^*)\hat T_s(\alpha_n I + \hat T_s^*\hat T_s)^{-1}\hat T_s^*(\hat r - \hat T\varphi) \right\|^2 \\
			& \leq \left\|(T^*_sT_s)^{\frac{s-c}{2(a+s)}}(\alpha_n I + T_s^*T_s)^{-1}\right\|^2_{\mathrm{op}}\|\hat T_s^* - T_s^*\|^2_{\mathrm{op}}\left\|\hat T_s(\alpha_n I + \hat T_s^*\hat T_s)^{-1}\hat T_s^*\right\|^2_{\mathrm{op}}\left\|\hat r - \hat T\varphi\right\|^2 \\
			& \leq \left\|(T^*_sT_s)^{\frac{s-c}{2(a+s)}}(\alpha_n I + T_s^*T_s)^{-1}\right\|^2_{\mathrm{op}}\alpha_n \left\|\hat r - \hat T\varphi\right\|^2 \\
			& \lesssim_P \alpha_n^{-\frac{2a+c+s}{a+s}}\alpha_n\left\|\hat r - \hat T\varphi\right\|^2 \\
			& \lesssim_P \alpha_n^{-\frac{a+c}{a+s}}\left\|\hat r - \hat T\varphi \right\|^2.
		\end{aligned}
	\end{equation*}
	
	Similarly, decompose
	\begin{equation*}
		\begin{aligned}		
			\|IV_n\|_c^2 & = \left\|\alpha_n\left[(\alpha_n I + \hat T^*_s\hat T_s)^{-1} - (\alpha_n I + \hat T^*_s\hat T_s)^{-1}\right]L^s\varphi\right\|^2_{c-s} \\
			& \lesssim \left\|(\alpha_nI + \hat T^*_s\hat T_s)^{-1}\left(\hat T^*_s\hat T_s - T_s^*T_s\right)\alpha_n(\alpha_nI + T^*_sT_s)^{-1}L^s\varphi\right\|^2_{c-s}\\
			& \leq 2 S_{1n} + 2 S_{2n} 
		\end{aligned}
	\end{equation*}
	with $S_{1n}$ and $S_{2n}$ defined below. In particular, 
	\begin{equation*}
		\begin{aligned}
			S_{1n} & = \left\|(\alpha_nI + \hat T^*_s\hat T_s)^{-1}\hat T^*_s\left(\hat T_s - T_s\right)\alpha_n(\alpha_nI + T^*_sT_s)^{-1}L^s\varphi\right\|^2_{c-s} \\
			& \lesssim \left\|(\alpha_nI + \hat T^*_s\hat T_s)^{-1}\hat T^*_s\left(\hat T_s - T_s\right)\alpha_n(\alpha_nI + T^*_sT_s)^{-1}L^s\varphi\right\|^2 \\
			& \lesssim \left\|(\alpha_nI + \hat T^*_s\hat T_s)^{-1}\hat T^*_s\right\|^2_{\mathrm{op}}\|\hat T - T\|^2_{\mathrm{op}}\left\|L^{-s}\alpha_n(\alpha_nI + T^*_sT_s)^{-1}L^s\varphi\right\|^2 \\
			& \leq \left\|\alpha_n(\alpha_nI + T^*_sT_s)^{-1}L^s\varphi\right\|^2_{-s} \\
			& \lesssim \left\|\alpha_n(T_s^*T_s)^\frac{s}{2(a+s)}(\alpha_n I + T^*_sT_s)^{-1}(T_s^*T_s)^\frac{b-s}{2(a+s)}\psi\right\|^2\\ 
			& \lesssim \sup_\lambda\left|\frac{\alpha_n\lambda^{\frac{b}{2(a+s)}}}{\alpha_n + \lambda}\right|^2 \lesssim \alpha_n^\frac{b}{a+s},
		\end{aligned}
	\end{equation*}
	where the last two lines follow by \cite{engl1996regularization}, Corollary 8.22 with $\nu=s/(a+s)\leq 1$ and previous computations. Similarly,
	\begin{equation*}
		\begin{aligned}
			S_{2n} & = \left\|(\alpha_nI + \hat T^*_s\hat T_s)^{-1}\left(\hat T^*_s - T^*_s\right)\alpha_nT_s(\alpha_nI + T^*_sT_s)^{-1}L^s\varphi\right\|^2_{c-s} \\	
			& \leq \left\|(T^*_sT_s)^{\frac{s-c}{2(a+s)}}(\alpha_nI + \hat T^*_s\hat T_s)^{-1}\right\|^2_{\rm op}\|\hat T^*_s - T^*_s\|^2_{\mathrm{op}}\left\|\alpha_nT_s(\alpha_nI + T^*_sT_s)^{-1}(T_s^*T_s)^\frac{b-s}{2(a+s)}\psi\right\|^2 \\
			& \lesssim_P \alpha_n^{-\frac{2a+s+c}{a+s}}\|\hat T - T\|^2_{\mathrm{op}}\alpha_n^{\frac{b+a}{a+s}} \\
			& \lesssim_P \alpha_n^{\frac{b-c}{a+s}}.
		\end{aligned}
	\end{equation*}
	The result follows from combining all estimates.
\end{proof}

\begin{proof}[Proof of Corollary A.1.1]
	By the Cauchy-Schwartz inequality
	\begin{equation*}
		\begin{aligned}
			\|\hat T - T\|^2_{\mathrm{op}} & \leq \left\|\hat f_{ZW} - f_{ZW}\right\|^2 \\
			& = O_P\left(\frac{1}{nh_n^{p+q}} + h_n^{2t}\right),
		\end{aligned}
	\end{equation*}
	where the second line follows from the well-known risk bound; see, e.g., \cite{gine2015mathematical}, p. 403-404 under Assumption~\ref{as:dgp}.
	Therefore, by Theorem~\ref{thm:risk_hs}
	\begin{equation*}
		\left\|\hat \varphi - \varphi\right\|^2_c \lesssim_P \alpha_n^{-\frac{a+c}{a+s}}\left\|\hat r - \hat T\varphi\right\|^2 + \alpha_n^{\frac{b-c}{a+s}}.
	\end{equation*}	
	The proof of
	\begin{equation*}
		\left\|\hat r - \hat T\varphi\right\|^2 = O_P\left(\frac{1}{nh_n^q} + h_n^{2t}\right)
	\end{equation*}
	under Assumption~\ref{as:dgp} can be found in \cite{babii2017completeness}.
\end{proof}

\begin{lemma}\label{lemma:contribution}
	Suppose that Assumptions~\ref{as:hilbert_scale}, \ref{as:dgp}, \ref{as:smoothness}, and \ref{as:tuning}, and  are satisfied. Then
	\begin{equation*}
		\langle\hat\varphi - \varphi,f_{UZ}(u,.)\rangle = \frac{1}{\sqrt{n}}\sum_{i=1}^nU_i\left[T(T^*T)^{-1}f_{UZ}(u,.)\right](W_i) + o_P(1)
	\end{equation*}
\end{lemma}
\begin{proof}
	Similarly to the proof of Theorem~\ref{thm:risk_hs}, decompose
	\begin{equation*}
		\sqrt{n}\langle \hat\varphi - \varphi,f_{UZ}(u,.)\rangle \triangleq I_n(u) + II_n(u) + III_n(u)
	\end{equation*}
	with
	\begin{equation*}
		\begin{aligned}
			I_n(u) & = \sqrt{n}\left\langle L^{-s}(\alpha_n I + T_s^*T_s)^{-1}T_s^*(\hat r - \hat T\varphi), f_{UZ}(u,.)\right\rangle, \\
			II_n(u) & = \sqrt{n}\left\langle L^{-s}(\alpha_n I + T_s^*T_s)^{-1}(\hat T_s^* - T_s^*)(\hat r - \hat T\varphi), f_{UZ}(u,.)\right\rangle, \\
			III_n(u) & = \sqrt{n}\left\langle L^{-s}\left[(\alpha_n I + \hat T_s^*\hat T_s)^{-1} - (\alpha_n I + T_s^*T_s)^{-1}\right]\hat T_s^*(\hat r - \hat T\varphi), f_{UZ}(u,.)\right\rangle, \\
			IV_n(u) & = \sqrt{n}\left\langle L^{-s}(\alpha_nI + \hat T^*_s\hat T_s)^{-1}\hat T^*_s\hat T_sL^s\varphi - L^{-s}(\alpha_n I + T^*_sT_s)^{-1}T^*_sT_sL^s\varphi,f_{UZ}(u,.)\right\rangle, \\
			V_n(u) & = \sqrt{n}\left\langle L^{-s}(\alpha_n I + T^*_sT_s)^{-1}T_s^*T_sL^s\varphi - \varphi,f_{UZ}(u,.)\right\rangle .
		\end{aligned}
	\end{equation*}
	We show below that $\|II_n+III_n+IV_n+V_n\|_\infty = o_P(1)$. To that end, first since $T_s=TL^{-s}$
	\begin{equation*}
		\begin{aligned}
			\|II_n\|_\infty & = \sqrt{n}\sup_u\left\langle(\alpha_n I + T^*T)^{-1}(\hat T^* - T^*)(\hat r - \hat T\varphi),f_{UZ}(u,.)\right\rangle \\
			& \leq \sqrt{n}\left\|(\hat T^* - T^*)(\hat r - \hat T\varphi)\right\|\sup_u\left\|(\alpha_n I + T^*T)^{-1}f_{UZ}(u,.)\right\| \\
			& \lesssim \sqrt{n}\|\hat T^* - T^*\|_{\mathrm{op}}\left\|\hat r - \hat T\varphi\right\|\left\|(\alpha_n I + T^*T)^{-1}T^*T\right\|_{\mathrm{op}} \\
			& \lesssim_P \sqrt{n}\left(\frac{1}{\sqrt{nh_n^{p+q}}} + h_n^t\right)\left(\frac{1}{\sqrt{nh_n^q}} + h_n^t\right) = o_P(1),
		\end{aligned}
	\end{equation*}
	where the third line follows under Assumptions~\ref{as:hilbert_scale} (i) and \ref{as:smoothness} (i); and the fourth by arguments as in the proof of Corollary~\ref{cor:rates_npiv} under Assumptions~\ref{as:dgp} and \ref{as:tuning} (ii).
	
	Second,
	\begin{equation*}
		\begin{aligned}
			\|V_n\|_\infty & = \sqrt{n}\sup_u\left|\left\langle L^{-(a+s)}\left[(\alpha_n I + T^*_sT_s)^{-1}T_s^*T_s - I\right]L^s\varphi,L^af_{UZ}(u,.) \right\rangle\right| \\
			& \lesssim \sqrt{n}\left\|T_s\left[(\alpha_n I + T^*_sT_s)^{-1}T_s^*T_s - I\right]L^s\varphi\right\| \\
			& \lesssim \sqrt{n}\left\|T_s\alpha_n(\alpha_n I + T^*_sT_s)^{-1}(T^*_sT_s)^\frac{b-s}{2(a+s)}\right\|_{\mathrm{op}} \\
			& \lesssim \sqrt{n}\sup_\lambda\left|\frac{\alpha_n\lambda^{\frac{b-s}{2(a+s)}+\frac{1}{2}}}{\alpha_n + \lambda}\right| \\
			& = \sqrt{n}\sup_\lambda\left|\frac{\alpha_n\lambda}{\alpha_n + \lambda}\right| \\
			& \lesssim \sqrt{n}\alpha_n = o(1),
		\end{aligned}
	\end{equation*}
	where the first equality follows since $L$ is self-adjoint; the second line by the Cauchy-Schwartz inequality since $\sup_u\|L^af_{UZ}(u,.)\|<\infty$ under Assumption~\ref{as:smoothness} (i) and by Assumption~\ref{as:hilbert_scale} (i); the third since $L^s\varphi=(T^*_sT_s)^\frac{b-s}{2(a+s)}\psi$ for some $\psi\in L_2$ by \cite{engl1996regularization}, Corollary 8.22; the fourth by the isometry of the functional calculus; and the last since $2s=b-a$ and since $n\alpha_n^2\to 0$ under Assumption~\ref{as:tuning} (iii).
	
	Next, decompose $III_n(u) = R_{1n}(u) + R_{2n}(u)$ with
	\begin{equation*}
		\begin{aligned}
			R_{1n}(u) & = \sqrt{n}\left\langle\hat r - \hat T\varphi, \hat T(\alpha_nI+\hat T^*\hat T)^{-1}\hat T^*(T - \hat T)(\alpha_nI + T^*T)^{-1}f_{UZ}(u,.) \right\rangle, \\
			R_{2n}(u) & = \sqrt{n}\left\langle\hat r - \hat T\varphi, \hat T(\alpha_nI+\hat T^*\hat T)^{-1}(T^* - \hat T^*)T(\alpha_nI + T^*T)^{-1}f_{UZ}(u,.) \right\rangle.
		\end{aligned}
	\end{equation*}
	By the Cauchy-Schwartz inequality and previous computations
	\begin{equation*}
		\begin{aligned}
			\|R_{1n}\|_\infty & \leq \sqrt{n}\left\|\hat r - \hat T\varphi\right\|\|\hat T(\alpha_nI+ \hat T^*\hat T)^{-1}\hat T^*\|_{\mathrm{op}}\|\hat T - T\|_{\mathrm{op}}\sup_u\|(\alpha_nI + T^*T)^{-1}f_{UZ}(u,.)\| \\
			& \lesssim  \sqrt{n}\left\|\hat r - \hat T\varphi\right\|\|\hat T - T\|_{\mathrm{op}}
		\end{aligned}
	\end{equation*}
	and
	\begin{equation*}
		\begin{aligned}
			\|R_{2n}\|_\infty & \leq \sqrt{n}\left\|\hat r - \hat T\varphi\right\|\|\hat T(\alpha_nI+ \hat T^*\hat T)^{-1}\|_{\mathrm{op}}\|\hat T^* - T^*\|_{\mathrm{op}}\sup_u\|T(\alpha_nI + T^*T)^{-1}f_{UZ}(u,.)\| \\
			& \lesssim \sqrt{n}\left\|\hat r - \hat T\varphi\right\|\alpha_n^{-1/2}\|\hat T - T\|_{\mathrm{op}}\|T(\alpha_nI + T^*T)^{-1}(T^*T)^{\kappa/2a}\|_{\mathrm{op}} \\
			& \lesssim \sqrt{n}\left\|\hat r - \hat T\varphi\right\|\|\hat T - T\|_{\mathrm{op}}\alpha_n^{\kappa/2a-1}.
		\end{aligned}
	\end{equation*}
	Therefore, under Assumption~\ref{as:tuning} since $\kappa>2a$
	\begin{equation*}
		\begin{aligned}
			\|III_n\|_\infty & \lesssim \sqrt{n}\left\|\hat r - \hat T\varphi\right\|\alpha_n^{-1/2}\|\hat T - T\|_{\mathrm{op}} \\
			& \sqrt{n}\left(\frac{1}{\sqrt{nh_n^{p+q}}} + h_n^t\right)\left(\frac{1}{\sqrt{nh_n^q}} + h_n^t\right) = o_P(1).
		\end{aligned}
	\end{equation*}
	
	Similarly, decompose $IV_n(u) = S_{1n}(u) + S_{2n}(u)$ with
	\begin{equation*}
		\begin{aligned}
			S_{1n}(u) & = \sqrt{n}\left\langle L^{-s}(\alpha_nI + \hat T^*_s\hat T_s)^{-1}\hat T^*_s(\hat T_s - T_s)\alpha_n(\alpha_nI + T^*_sT_s)^{-1}L^s\varphi,f_{UZ}(u,.) \right\rangle, \\
			S_{2n}(u) & = \sqrt{n}\left\langle L^{-s}(\alpha_nI + \hat T^*_s\hat T_s)^{-1}(\hat T_s^* - T_s^*)T_s\alpha_n(\alpha_nI + T^*_sT_s)^{-1}L^s\varphi,f_{UZ}(u,.) \right\rangle. \\
		\end{aligned}
	\end{equation*}
	Likewise, by the Cauchy-Schwartz inequality and previous computations
	\begin{equation*}
		\begin{aligned}
			\|S_{1n}\|_\infty & \lesssim \sqrt{n}\left\|T_s(\alpha_nI + \hat T^*_s\hat T_s)^{-1}\hat T^*_s\left(\hat T_s - T_s\right)\alpha_n(\alpha_nI + T^*_sT_s)^{-1}L^s\varphi\right\| \\
			& \lesssim_P \sqrt{n}\alpha_n^{1/2}\left\|\alpha_n(\alpha_nI + T^*_sT_s)^{-1}(T^*_sT_s)^{\frac{b-s}{2(a+s)}}\psi\right\|  \lesssim_P \sqrt{n}\alpha_n = o_P(1)
		\end{aligned}
	\end{equation*}
	and
	\begin{equation*}
		\begin{aligned}
			\|S_{2n}\|_\infty & \lesssim \sqrt{n}\left\|T_s(\alpha_nI + \hat T_s^*\hat T_s)^{-1}\left(\hat T_s^* - T_s^*\right)\alpha_nT_s(\alpha_nI + T^*_sT_s)^{-1}L^s\varphi\right\| \\
			& \lesssim_P \sqrt{n}\left\|\alpha_nT_s(\alpha_nI + T^*_sT_s)^{-1}L^s\varphi\right\| \lesssim_P \sqrt{n}\alpha_n  = o_P(1),
		\end{aligned}
	\end{equation*}
	where we use $\|\hat T - T\|_{\rm op}\lesssim_P\alpha_n^{1/2}$ and $2s=b-a$; see also the proof of Theorem~\ref{thm:risk_hs}. Therefore, $\|IV_n\|_\infty = o_P(1)$.
	
	Combining all estimates, we obtain uniformly over $u\in\R$
	\begin{equation}\label{eq:uniform_expansion}
		I_n(u) = \sqrt{n}\left\langle T^*(\hat r - \hat T\varphi),(\alpha_nI+T^*T)^{-1}f_{UZ}(u,.) \right\rangle + o_P(1).
	\end{equation}
	Next, note that
	\begin{equation*}
		(\hat r - \hat T\varphi)(w) = \frac{1}{n}\sum_{i=1}^n(Y_i - [\varphi\ast K_z](Z_i))h_n^{-q}K_w\left(h_n^{-1}(W_i - w)\right)
	\end{equation*}
	with $[\varphi\ast K_z](z) \triangleq \int \varphi(v)h_n^{-p}K_z\left(h_n^{-1}(z-v)\right)\dx v$, whence
	\begin{equation*}
		T^*(\hat r - \hat T\varphi) = \frac{1}{n}\sum_{i=1}^n(Y_i - [\varphi\ast K_z](Z_i))[f_{ZW}\ast K_w](.,W_i)
	\end{equation*}
	with $[f_{ZW}\ast K_w](z,w) \triangleq \int f_{ZW}(z,v)h_n^{-q}K_w\left(h_n^{-1}(w-v)\right)\dx v$. Using this observation, decompose equation~(\ref{eq:uniform_expansion}) further
	\begin{equation*}
		\begin{aligned}
			I_n(u) & = \frac{1}{\sqrt{n}}\sum_{i=1}^nU_i\left\langle f_{ZW}(.,W_i),(T^*T)^{-1}f_{UZ}(u,.)\right\rangle + Q_{1n} + Q_{2n} + Q_{3n} + o_P(1) \\
		\end{aligned}
	\end{equation*}
	with
	\begin{equation*}
		\begin{aligned}
			Q_{1n}(u) & = \left\langle\frac{1}{\sqrt{n}}\sum_{i=1}^n[\varphi - \varphi\ast K_z](Z_i)[f_{ZW}\ast K_w](.,W_i), (\alpha_n I + T^*T)^{-1}f_{UZ}(u,.)\right\rangle, \\
			Q_{2n}(u) & = \left\langle\frac{1}{\sqrt{n}}\sum_{i=1}^nU_i\left\{[f_{ZW}\ast K_w](.,W_i) - f_{ZW}(.,W_i)\right\}, (\alpha_n I + T^*T)^{-1}f_{UZ}(u,.)\right\rangle, \\
			Q_{3n}(u) & = \left\langle\frac{1}{\sqrt{n}}\sum_{i=1}^nU_if_{ZW}(.,W_i),\left[(\alpha_n I + T^*T)^{-1} - (T^*T)^{-1}\right]f_{UZ}(u,.)\right\rangle.
		\end{aligned}
	\end{equation*}	
	By the Cauchy-Schwartz inequality
	\begin{equation*}
		\begin{aligned}
			\E\|Q_{1n}\|_\infty & \leq \E\left\|\frac{1}{\sqrt{n}}\sum_{i=1}^n[\varphi - \varphi\ast K_z](Z_i)[f_{ZW}\ast K_w](.,W_i)\right\|\sup_u\|(\alpha_nI + T^*T)^{-1}f_{UZ}(u,.) \| \\
			& \lesssim  \frac{1}{\sqrt{n}}\sum_{i=1}^n\E|[\varphi - \varphi\ast K_z](Z_i)|\left\|[f_{ZW}\ast K_w](.,W_i)\right\|\\
			& \leq \sqrt{n}\|\varphi - \varphi\ast K_z\|\|f_{ZW}\ast K_w\| \\
			& \lesssim \sqrt{n}h_n^{b}, \\
		\end{aligned}
	\end{equation*}
	where the second line follows by triangle inequality and Assumption~\ref{as:smoothness} (i); the third by Assumption~\ref{as:dgp} (i), Cauchy-Schwartz inequality, and since $f_Z$ and $f_W$ are uniformly bounded under Assumption~\ref{as:dgp} (ii); and the last by the standard bias computations under Assumptions~\ref{as:hilbert_scale} (ii) and \ref{as:dgp}, and Young's inequality under Assumption~\ref{as:dgp} (ii) and (iv).
	
	Similarly, by the Cauchy-Schwartz inequality and Assumption~\ref{as:hilbert_scale} (i)
	\begin{equation*}
		\begin{aligned}
			\E\|Q_{2n}\|_\infty^2 & \lesssim \E\left\|\frac{1}{\sqrt{n}}\sum_{i=1}^nU_i\left\{[f_{ZW}\ast K_w](.,W_i) - f_{ZW}(.,W_i)\right\}\right\|^2 \\
			& = \E\left\|U\left\{[f_{ZW}\ast K_w](.,W) - f_{ZW}(.,W)\right\}\right\|^2 \\
			& \lesssim \E\|[f_{ZW} - f_{ZW}\ast K_w](.,W)\|^2 \\
			& \lesssim \|f_{ZW} - f_{ZW}\ast K_w\|^2 \\
			& \lesssim h_n^{2t},
		\end{aligned}
	\end{equation*}
	where the second line follows under the i.i.d. assumption; the third since $\E[U|W]\leq C$ under Assumption~\ref{as:dgp} (i); the fourth since $f_W$ is uniformly bounded under Assumption~\ref{as:dgp} (ii); and the last by the standard bias computations under Assumptions~\ref{as:hilbert_scale} (ii) and \ref{as:dgp} (iv).
	
	Lastly, by the Cauchy-Schwartz inequality
	\begin{equation*}
		\begin{aligned}
			\E\|Q_{3n}\|_\infty^2 & = \E\left\|\frac{1}{\sqrt{n}}\sum_{i=1}^nU_if_{ZW}(.,W_i)\right\|^2\sup_u\left\|\left[(\alpha_n I + T^*T)^{-1} - (T^*T)^{-1}\right]f_{UZ}(u,.)\right\|^2 \\
			& = \E\|U_if_{ZW}(.,W_i)\|^2 \sup_u\left\|\alpha_n(\alpha_n I + T^*T)^{-1}(T^*T)^{-1}f_{UZ}(u,.)\right\|^2 \\
			& \lesssim \left\|\alpha_n(\alpha_n I + T^*T)^{-1}(T^*T)^{\kappa/2a-1}\right\|_{\mathrm{op}} \\
			& \lesssim \sup_\lambda\left|\frac{\alpha_n\lambda^{\kappa/2a-1}}{\alpha_n + \lambda}\right| \lesssim \alpha_n^{(\kappa/2a-1)\wedge 1},
		\end{aligned}
	\end{equation*}
	where the second inequality follows under Assumptions~\ref{as:dgp} (i); the third line under Assumptions~\ref{as:hilbert_scale}, \ref{as:dgp} (i)-(ii), and \ref{as:smoothness} (i); and the last by the isometry of functional calculus.
	
	Combining these estimates under Assumptions~\ref{as:smoothness} (i) and \ref{as:tuning}, we obtain the result
	\begin{equation*}
		\begin{aligned}
			II_n(u) & = \frac{1}{\sqrt{n}}\sum_{i=1}^nU_i\left\langle f_{ZW}(.,W_i),(T^*T)^{-1}f_{UZ}(u,.)\right\rangle + O_P\left(\sqrt{n}h_n^b + h_n^t + \alpha_n^{(\kappa/2a-1)\wedge 1}\right) + o_P(1) \\
			& = \frac{1}{\sqrt{n}}\sum_{i=1}^nU_i[T(T^*T)^{-1}f_{UZ}(u,.)](W_i) + o_P(1).
		\end{aligned}
	\end{equation*}
\end{proof}

\section{Additional Monte Carlo experiments}
In this section, we report results of additional Monte Carlo experiments when the structural function is $\varphi(x) = \exp(-x^2/4)$. The rest of the data-generating process is the same as in the main part of the paper.

Figure~\ref{fig:3} shows the distribution of the test statistics under the null hypothesis and the two alternative hypotheses for different sample sizes. The two distributions are sufficiently distinct once the alternative hypothesis becomes more separated from the null hypothesis.
\begin{figure}
	\centering
	\begin{subfigure}[b]{0.49\textwidth}
		\includegraphics[width=\textwidth]{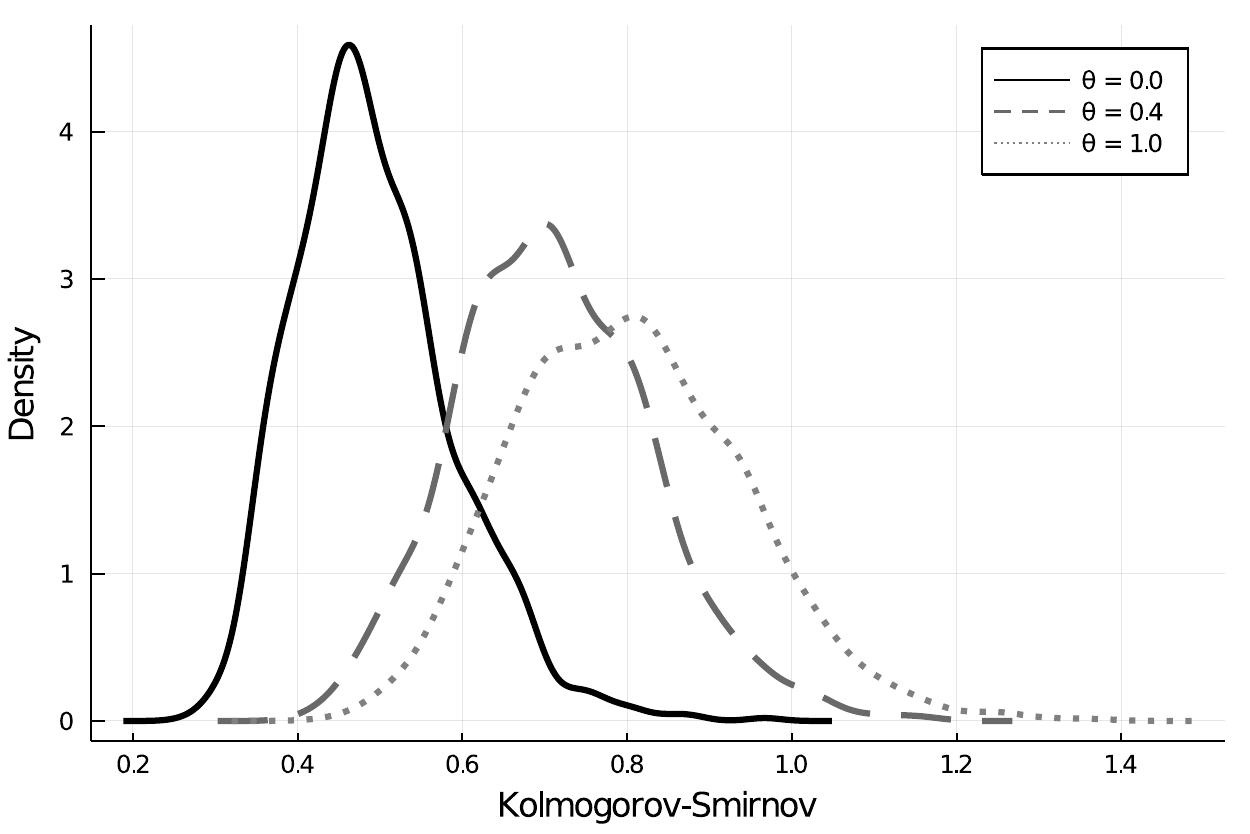}
		\caption{Sample size: $n=500$}
	\end{subfigure}
	\begin{subfigure}[b]{0.49\textwidth}
		\includegraphics[width=\textwidth]{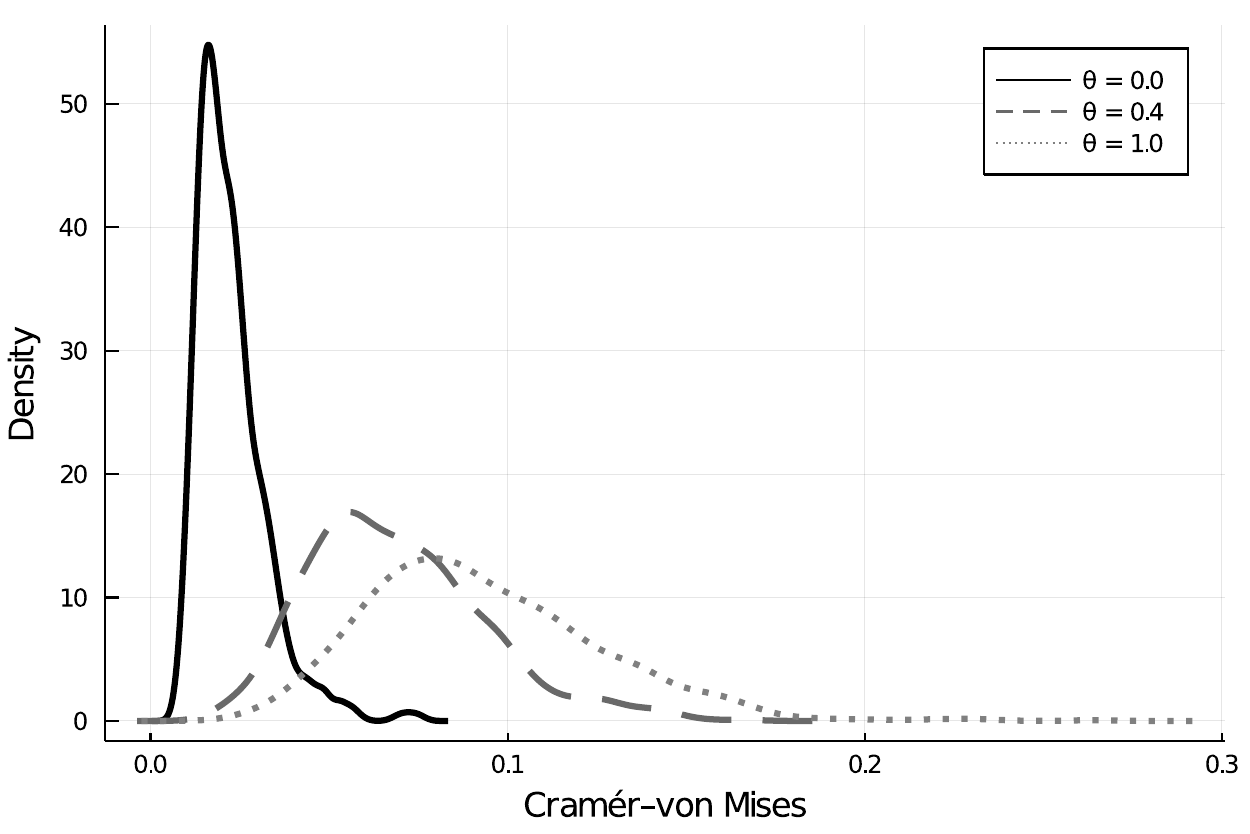}
		\caption{Sample size: $n=500$}
	\end{subfigure}
	\begin{subfigure}[b]{0.49\textwidth}
		\includegraphics[width=\textwidth]{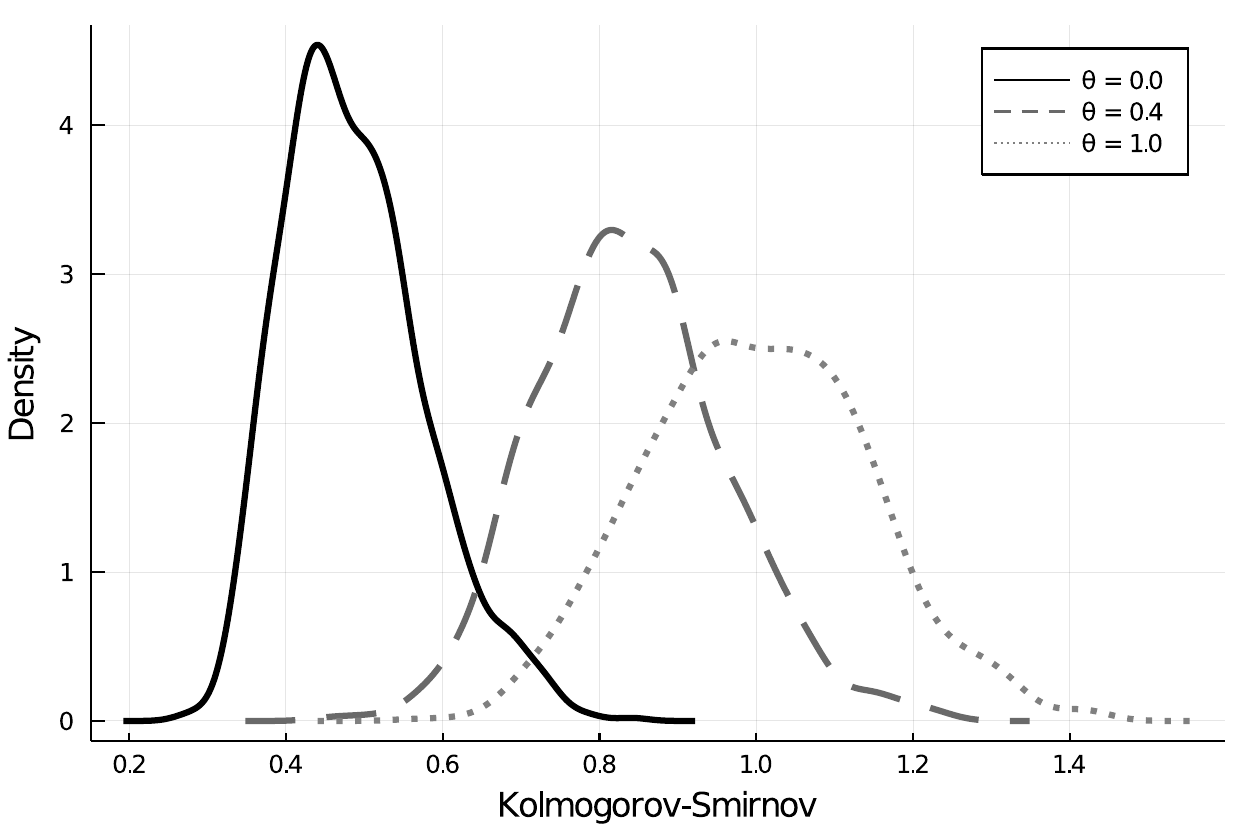}
		\caption{Sample size: $n=1,000$}
	\end{subfigure}
	\begin{subfigure}[b]{0.49\textwidth}
		\includegraphics[width=\textwidth]{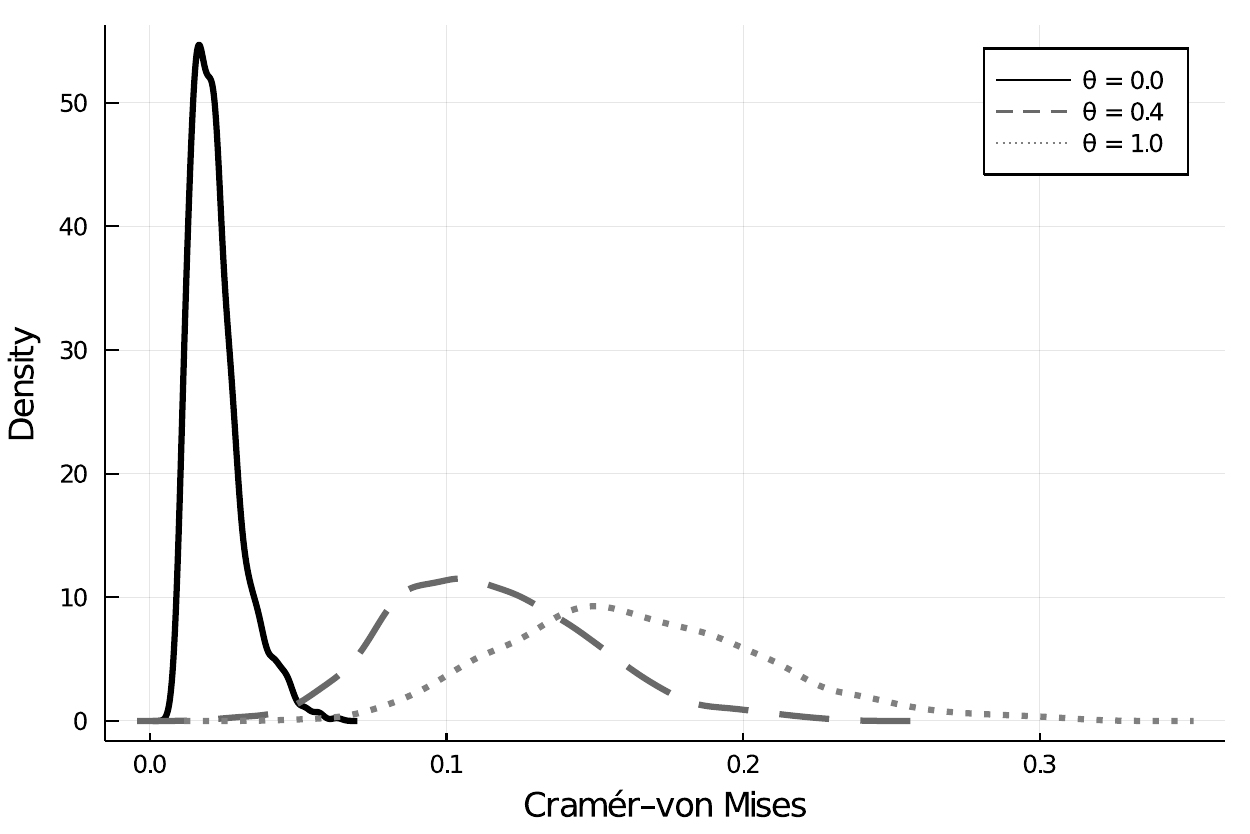}
		\caption{Sample size: $n=1,000$}
	\end{subfigure}
	\caption{Finite-sample distribution of the test -- density estimates of the distribution of Kolmogorov-Smirnov and Cram\'{e}r-von Mises statistics under $H_0$, $\theta=0$ (solid line), and two alternative hypotheses, $\theta=0.4$ (dashed line) and $\theta=1$ (dotted line).}
	\label{fig:3}
\end{figure}
We plot in Figure~\ref{fig:4} the power curves when the level of the test is fixed at $5\%$. The power of the test increases once alternative hypotheses become more distant from the null hypothesis and when the sample size is larger. The Cram\'{e}r-von Mises test seems to have a higher power for the  class of considered alternatives.
\begin{figure}
	\centering
	\begin{subfigure}[b]{0.49\textwidth}
		\includegraphics[width=\textwidth]{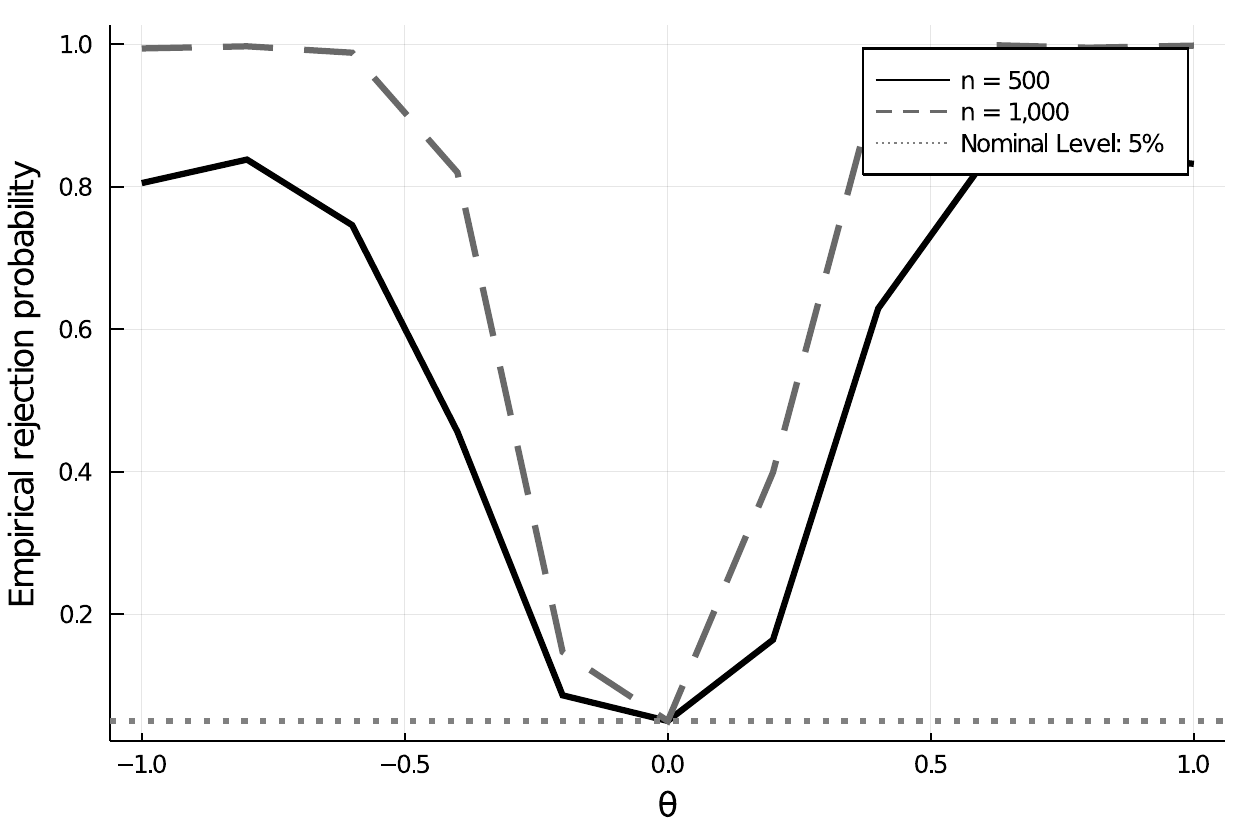}
		\caption{Kolmogorov-Smirnov test}
	\end{subfigure}
	\begin{subfigure}[b]{0.49\textwidth}
		\includegraphics[width=\textwidth]{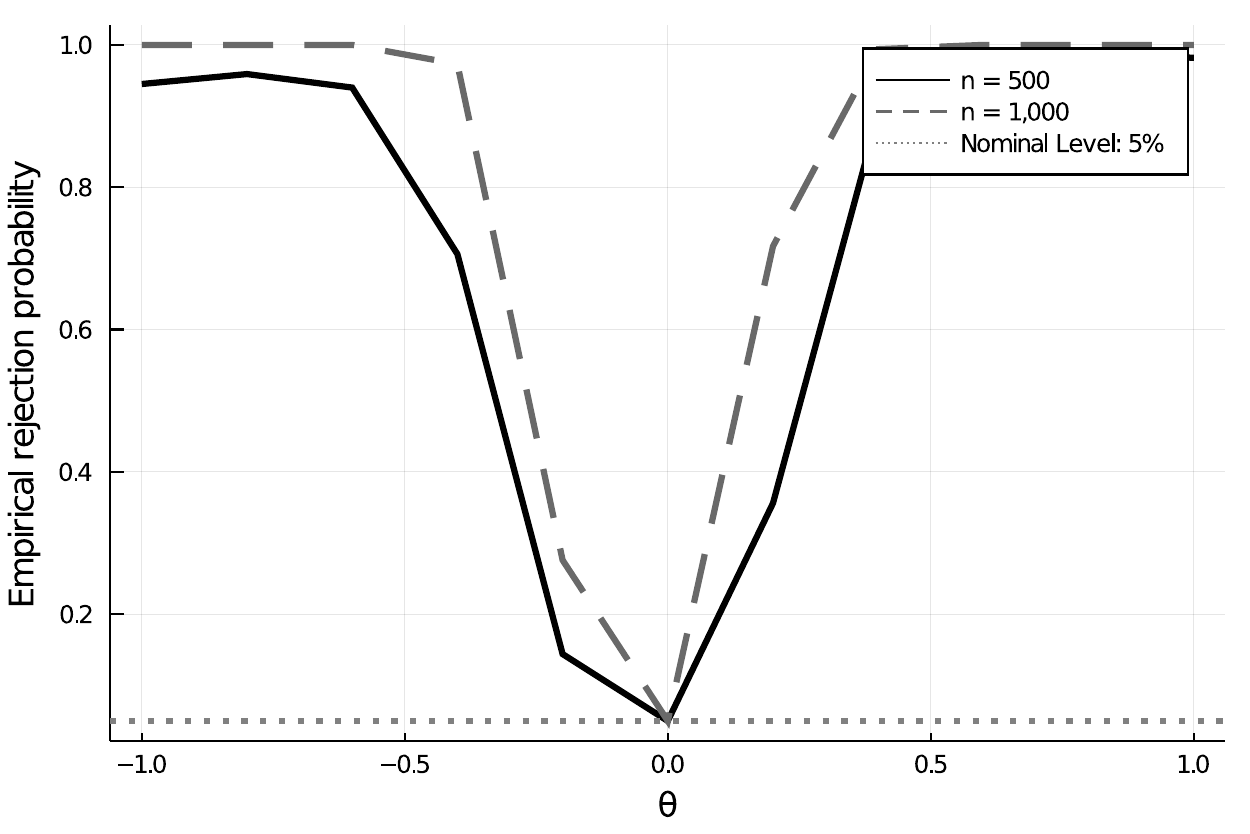}
		\caption{Cram\'{e}r-von Mises test}
	\end{subfigure}
	\caption{Power curves. The figure shows empirical rejection probabilities as a function of degree of separability $\theta$ for samples of size $n=500$ (solid line) and $n=1,000$ (dashed line). The value $\theta=0$ corresponds to the separable model, while $\theta\ne 0$ are deviations from separability. The nominal level of the test is set at $5\%$.}
	\label{fig:4}
\end{figure}
Overall, the findings are largely similar to the findings of experiments presented in the main part of the paper.

\clearpage
\bibliographystyle{econometrica}
\bibliography{references}

\end{document}